\def\1{\raisebox{2pt}{\rm{$\chi$}}}
\newtheorem{theorem}{Theorem}[section]
\newtheorem{lemma}[theorem]{Lemma}
\newtheorem{proposition}[theorem]{Proposition}
\theoremstyle{definition}
\newtheorem{remark}[theorem]{Remark}
\newcommand{\R}{{\mathbb R}}
\newcommand{\N}{{\mathbb N}}
\newcommand{\Z}{{\mathbb Z}}
\newcommand{\Ha}{{\mathcal H}}
\newcommand\cp{\operatorname{cap}}
\newcommand\diam{\operatorname{diam}}
\newcommand{\eps}{{\varepsilon}}
\def\1{\raisebox{2pt}{\rm{$\chi$}}}
\newcommand{\Lip}{\operatorname{Lip}}
\def\vint_#1{\mathchoice%
        {\mathop{\kern 0.2em\vrule width 0.6em height 0.69678ex depth -0.58065ex
                \kern -0.8em \intop}\nolimits_{\kern -0.4em#1}}%
        {\mathop{\kern 0.1em\vrule width 0.5em height 0.69678ex depth -0.60387ex
                \kern -0.6em \intop}\nolimits_{#1}}%
        {\mathop{\kern 0.1em\vrule width 0.5em height 0.69678ex
            depth -0.60387ex
                \kern -0.6em \intop}\nolimits_{#1}}%
        {\mathop{\kern 0.1em\vrule width 0.5em height 0.69678ex depth -0.60387ex
                \kern -0.6em \intop}\nolimits_{#1}}}
\def\vintslides_#1{\mathchoice%
        {\mathop{\kern 0.1em\vrule width 0.5em height 0.697ex depth -0.581ex
                \kern -0.6em \intop}\nolimits_{\kern -0.4em#1}}%
        {\mathop{\kern 0.1em\vrule width 0.3em height 0.697ex depth -0.604ex
                \kern -0.4em \intop}\nolimits_{#1}}%
        {\mathop{\kern 0.1em\vrule width 0.3em height 0.697ex depth -0.604ex
                \kern -0.4em \intop}\nolimits_{#1}}%
        {\mathop{\kern 0.1em\vrule width 0.3em height 0.697ex depth -0.604ex
                \kern -0.4em \intop}\nolimits_{#1}}}
\newcommand{\intav}{\vint}
\newcommand{\aveint}[2]{\mathchoice%
        {\mathop{\kern 0.2em\vrule width 0.6em height 0.69678ex depth -0.58065ex
                \kern -0.8em \intop}\nolimits_{\kern -0.45em#1}^{#2}}%
        {\mathop{\kern 0.1em\vrule width 0.5em height 0.69678ex depth -0.60387ex
                \kern -0.6em \intop}\nolimits_{#1}^{#2}}%
        {\mathop{\kern 0.1em\vrule width 0.5em height 0.69678ex depth -0.60387ex
                \kern -0.6em \intop}\nolimits_{#1}^{#2}}%
        {\mathop{\kern 0.1em\vrule width 0.5em height 0.69678ex depth -0.60387ex
                \kern -0.6em \intop}\nolimits_{#1}^{#2}}}
\newcommand{\ol}{\overline}
\newcommand{\wtilde}{\widetilde}
\newcommand{\dist}{\operatorname{dist}}
\title[Self-improvement of uniform fatness]{Self-improvement of uniform fatness revisited}
\author[J.\! Lehrb\"ack]{Juha Lehrb\"ack}   %  can use \and
\address[J.L.]{University of Jyvaskyla, Department of Mathematics and Statistics, P.O. Box 35, FI-40014 University of Jyvaskyla, Finland}
\email{juha.lehrback@jyu.fi}
\author[H.\! Tuominen]{Heli Tuominen}   %  can use \and
\address[H.T.]{University of Jyvaskyla, Department of Mathematics and Statistics, P.O. Box 35, FI-40014 University of Jyvaskyla, Finland} \email{heli.m.tuominen@jyu.fi}
\author[A.V.\! V\"ah\"akangas]{Antti V. V\"ah\"akangas}
\address[A.V.V.]{University of Jyvaskyla, Department of Mathematics and Statistics, P.O. Box 35, FI-40014 University of Jyvaskyla, Finland} \email{antti.vahakangas@iki.fi}
\begin{document}

\keywords{Self-improvement, uniform fatness, local Hardy inequality, metric space}
\subjclass[2010]{Primary 31C15, Secondary 31E05, 35A23}

%26D15   Inequalities for sums, series and integrals
%26D10	Inequalities involving derivatives and differential and
%integral operators
%31A15	Potentials and capacity, harmonic measure, extremal length
%31B25	Boundary behavior (in Potential theory)
%46E35	Sobolev spaces and other spaces of smooth functions, embedding theorems, trace theorems
%31C25	Dirichlet spaces
%	26A12  	Rate of growth of functions, orders of infinity, slowly varying functions [See also 26A48]

\begin{abstract}
We give a new proof for the self-improvement of uniform
$p$-fatness in the setting of general metric spaces. 
Our proof is based on rather standard methods
of geometric analysis, and in particular the proof avoids
the use of deep results from potential theory and analysis 
on metric spaces that have been indispensable in the previous
proofs of the self-improvement. A key ingredient in the proof
is a self-improvement property for local Hardy inequalities.
\end{abstract}

\maketitle

\section{Introduction}

Self-improvement is among the most profound and beautiful phenomena
in mathematical analysis, and a source of important tools in the proofs of several deep 
and perhaps even surprising results.
Important examples of concepts enjoying self-improvement include reverse H\"older inequalities, 
Muckenhoupt's $A_p$ classes of weights, Poincar\'e inequalities,
and the main topics of this paper: Hardy inequalities and 
uniform $p$-fatness related to the variational $p$-capacity.

That a uniformly $p$-fat set $E$, for $1<p<\infty$, is actually uniformly $q$-fat
for some $1\le q<p$ as well, was first proven by Lewis~\cite{MR946438} in the 
Euclidean case $E\subset\R^n$.
In fact, Lewis studied more general $(\alpha,p)$-fatness conditions  
related to Riesz capacities,
but when $\alpha=1$ his setting is equivalent to that of the variational $p$-capacity.
Another proof for the self-improvement of uniform $p$-fatness in (weighted) $\R^n$ was given by
Mikkonen~\cite{MR1386213}, and in~\cite{MR1869615} Bj\"orn, MacManus and Shanmugalingam generalized
the self-improvement to more general metric spaces,  essentially proving the 
following theorem (although in~\cite{MR1869615} the
assumptions on the space $X$ were slightly stronger).

\begin{theorem}\label{t.self-impr}
Let $1< p<\infty$ and let $X$ be a complete 
metric measure space equipped with a doubling measure $\mu$ and 
supporting a $(1,p)$-Poincar\'e inequality.
Assume that $E\subset X$ is a  uniformly $p$-fat closed set.
Then there exists $1<q<p$ such that $E$ is also uniformly $q$-fat
(quantitatively).
\end{theorem}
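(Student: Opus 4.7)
The plan is to prove Theorem~\ref{t.self-impr} by passing through local Hardy inequalities. Three ingredients are needed: (i) uniform $p$-fatness of $E$ implies a local $p$-Hardy inequality for Lipschitz functions vanishing on $E$; (ii) local $p$-Hardy inequalities self-improve to local $q$-Hardy inequalities for some $1<q<p$; (iii) a local $q$-Hardy inequality implies uniform $q$-fatness of $E$. Combining (i)--(iii) yields the desired self-improvement, with all constants tracked quantitatively in terms of the doubling, Poincar\'e, and fatness data.

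For (i), fix $x_0\in E$, a small radius $r>0$, and a Lipschitz function $u$ vanishing on $E$ with upper gradient $g_u$. I would derive
\begin{equation*}
\int_{B(x_0,r)} \frac{\abs{u(y)}^p}{d(y,E)^p}\,d\mu(y) \leq C \int_{B(x_0,\Lambda r)} g_u(y)^p\,d\mu(y)
\end{equation*}
by a standard telescoping argument: for $\mu$-a.e.\ $y\in B(x_0,r)\setminus E$, expand $|u(y)|$ as a sum of differences of ball averages along a chain running from $y$ toward $E$, estimate each difference by the $(1,p)$-Poincar\'e inequality, and convert the average over the ball closest to $E$ into a gradient integral using Maz'ya's capacitary inequality together with the $p$-fatness lower bound on $\cp_p$. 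For (iii), apply the local $q$-Hardy inequality to a Lipschitz function $1-\varphi$, where $\varphi$ is admissible for $\cp_q(E\cap\overline{B}(x_0,r),B(x_0,2r))$: on an annular subregion of $B(x_0,r)$ one has $d(\cdot,E)\lesssim r$, so the weighted integral bounds $\mu(B(x_0,r))/r^p$ from below up to a constant, which after rearrangement gives the capacity estimate characterizing uniform $q$-fatness.

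The main obstacle is (ii). My intended strategy is a truncation-and-iteration scheme in the spirit of Lewis and Koskela--Zhong: apply the local $p$-Hardy inequality to truncations $u_k=\min\{u,2^k\}$, sum over $k\in\Z$ with geometrically chosen weights, and combine with a reverse H\"older estimate for the density of the $p$-capacitary potentials of small pieces of $E$ inside balls. With the doubling and Poincar\'e geometry in hand, this should produce a genuine gain of integrability and deliver an $L^p\to L^q$ improvement with $p-q>0$ depending only on the structural constants. The delicate point will be to carry out the reverse H\"older step without invoking the Keith--Zhong self-improvement of the $(1,p)$-Poincar\'e inequality or the full machinery of metric potential theory, since the whole purpose of the new proof is to avoid precisely those tools; this is where I expect most of the technical work to concentrate.
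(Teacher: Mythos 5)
Your three-step architecture (fatness $\Rightarrow$ local $p$-Hardy $\Rightarrow$ local $q$-Hardy $\Rightarrow$ $q$-fatness) is exactly the paper's, but two of the steps have genuine gaps as you describe them. In step (i), the telescoping/chaining argument yields a \emph{pointwise} Hardy inequality of the form $\lvert u(y)\rvert\le C\,d_E(y)\bigl(M_{\Lambda d_E(y)}\,g^p(y)\bigr)^{1/p}$. You cannot integrate this to the $L^p$ inequality you wrote down, because $M(g^p)$ is only in weak $L^1$; the standard repair is to run the chain at an exponent $t<p$ so that the maximal operator is bounded on $L^{p/t}$, but the pointwise $t$-Hardy inequality is equivalent to uniform $t$-fatness --- precisely what is being proved. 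This circularity is why the paper derives the integral local $p$-Hardy inequality by Wannebo's method instead: cover each annulus $G_m=\{2^{-m}r\le d_{E_B}<2^{-m+1}r\}$ by balls of radius comparable to $2^{-m}r$, apply the capacitary Poincar\'e inequality plus fatness on each ball, sum against the weight $2^{m(p+\beta)}$ for a small $\beta>0$, and absorb the resulting error term via Lemma~\ref{l.absorb}; no maximal function at exponent $p$ ever appears.

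In step (ii) you announce that the delicate point is to avoid Keith--Zhong. The paper does \emph{not} avoid it: in the complete case it invokes Keith--Zhong to obtain a $(p,p_0)$-Poincar\'e inequality for some $p_0<p$, and this is essential in the improvement step (Proposition~\ref{p.improved}), where the good set $G_\lambda$ is defined through $M_{d_{E_B}/2}\,g^{p_0}$ and the Hardy--Littlewood maximal theorem must be applied at the exponent $(p-\varepsilon)/p_0>1$. Without a sub-$p$ Poincar\'e inequality the Koskela--Zhong level-set scheme does not close, and your proposed substitute --- a reverse H\"older inequality for capacitary potentials --- is exactly the nonlinear potential theory the paper is designed to bypass. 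Two further points you should address: all the local inequalities must be formulated for the truncated set $E_B\subset\overline{B}$ of Lemma~\ref{l.truncation} rather than for $E$ itself (the absorption arguments need $d_{E_B}\gtrsim r$ off a fixed ball, and in step (iii) your function $1-\varphi$ vanishes only on $E\cap\overline{B}$, not on all of $E$); and in a general complete space the improved inequality controls $\Lip(u,\cdot)$, so passing back to the capacity defined via arbitrary $q$-weak upper gradients still requires Cheeger's minimality theorem, as in the paper's proof of Theorem~\ref{t.self-impr}.
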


The proofs of the versions of Theorem~\ref{t.self-impr} 
in~\cite{MR1869615,MR946438,MR1386213} utilize deep results from  linear and
non-linear potential theory, and moreover the proof in~\cite{MR1869615} %in the metric setting
is based on the impressive theory of differential structures on metric spaces, 
established by Cheeger in~\cite{MR1708448}. 

In this paper, we use a different approach
and establish a new proof for Theorem~\ref{t.self-impr} 
with the help of 
local Hardy inequalities and their self-improvement properties. 
Our proof is completely new also in $\R^n$, where all previously known proofs 
have been based on the ideas either in~\cite{MR946438} or in~\cite{MR1386213}. In addition, 
it turns out that with our approach it is possible
to obtain the following generalization of Theorem~\ref{t.self-impr} to
a non-complete space $X$, where Cheeger's theory is not available.

\begin{theorem}\label{t.self-impr-non-compl_Intro}
Let $1<p_0<p<\infty$ and let $X$ be a  
metric measure space equipped with a doubling measure $\mu$ and 
supporting a $(p,p_0)$-Poincar\'e inequality. 
Assume that $E\subset X$ is a  uniformly $p$-fat closed set
and that $E\cap \ol{B(w,r)}$ is compact for all $w\in E$ and all $r>0$.
Then there exists $p_0< q<p$ such that $E$ is also uniformly $q$-fat
(quantitatively).
\end{theorem}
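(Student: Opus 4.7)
Following the route indicated in the abstract, I would establish the theorem in three steps passing through local Hardy inequalities. \emph{Step I: uniform $p$-fatness of $E$ implies a local $p$-Hardy inequality.} Fix $w\in E$, $r>0$, write $B=B(w,r)$, and consider a Lipschitz function $u$ on $2B$ with upper gradient $g_u$ that vanishes in a quasicontinuous sense on $E\cap\ol B$. Splitting $u=(u-u_B)+u_B$, the $(p,p_0)$-Poincar\'e inequality controls $u-u_B$ in $L^p(B)$ by $r$ times the $L^{p_0}(\lambda B)$-norm of $g_u$, while the mean $u_B$ is controlled by feeding a Maz'ya-type capacitary test function, essentially $1-|u-u_B|/|u_B|$ truncated to $[0,1]$, into the uniform $p$-fatness lower bound for $E\cap\ol B$. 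Combining the two estimates yields the local $p$-Hardy inequality
\begin{equation*}
\int_B |u|^p\,d\mu \le C\,r^p \int_{\lambda B} g_u^p\,d\mu.
\end{equation*}
The compactness of $E\cap\ol B$ guarantees that the variational capacity $\cp_p(E\cap\ol B,2B)$ is populated by admissible test functions obtained by truncating members of $N^{1,p}(2B)$, so the fatness estimate can be invoked without the global completeness of $X$ used in \cite{MR1869615}.

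\emph{Step II: self-improvement of local Hardy inequalities.} This is the heart of the argument and the principal obstacle. I would decompose $\lambda B\setminus E$ into Whitney-type balls on which the $(p,p_0)$-Poincar\'e inequality, applied at the smaller exponent $p_0$, provides a lower-integrability estimate for $g_u$ on a slight enlargement; summing these estimates with weights $\dist(\cdot,E)^{-p}$ and invoking the local $p$-Hardy inequality from Step I converts into a reverse-H\"older-type inequality for the distance-weighted quantity $|u|/\dist(\cdot,E)$. A Gehring-lemma step then upgrades the integrability and translates into a local $q$-Hardy inequality with some $q\in(p_0,p)$. The gap $p-p_0$ in the Poincar\'e hypothesis is precisely the margin consumed by the improvement; equivalently, a dyadic truncation $u=\sum_k u_k$ with $u_k=\min(2^{-k},\max(0,|u|-2^{-k}))$ followed by application of the $p$-Hardy inequality to each $u_k$ yields a geometric series whose convergence survives lowering the exponent from $p$ to any $q>p_0$ sufficiently close to $p$.

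\emph{Step III: local $q$-Hardy implies uniform $q$-fatness.} Given $w\in E$ and $r>0$, choose an almost-minimizing admissible function $u$ for $\cp_q(E\cap\ol B,2B)$ normalized with $0\le u\le 1$ and $u\ge 1$ quasi-everywhere on $E\cap\ol B$; the compactness of $E\cap\ol B$ ensures such a $u$ exists with $\spt u\subset 2B$. Applying the local $q$-Hardy inequality from Step II to $1-u$, which vanishes on $E\cap\ol B$, and using doubling of $\mu$ to bound $\mu(B)$ from below yields
\begin{equation*}
\mu(B)\le C\,r^q \int_{\lambda B} g_u^q\,d\mu \le C\,r^q\,\cp_q(E\cap\ol B,2B),
\end{equation*}
which rearranges into the required quantitative lower bound $\cp_q(E\cap\ol B,2B)\ge c\,r^{-q}\mu(B)$, that is, the uniform $q$-fatness of $E$. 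Throughout, the compactness hypothesis on $E\cap\ol{B(w,r)}$ functions as a purely local substitute for global completeness, enabling the construction and truncation of the capacitary test functions in Steps I and III. The principal difficulty lies in Step II, where the chaining, the Poincar\'e improvement, and the Gehring step must be packaged to produce an exponent $q$ that is strictly above $p_0$ and strictly below $p$, with constants depending only on the doubling, Poincar\'e, and fatness data.
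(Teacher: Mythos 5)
Your three-step architecture (fatness $\Rightarrow$ local $p$-Hardy $\Rightarrow$ improved local $q$-Hardy $\Rightarrow$ $q$-fatness) is the same as the paper's, but two of the steps contain genuine gaps. The more serious one is Step III. The improvement mechanism in Step II, whatever its precise form, passes through Lipschitz truncations/extensions of $u$, so the $q$-Hardy inequality it produces naturally has the pointwise Lipschitz constant $\Lip(u,\cdot)$ (or a gradient built from Lipschitz constants of extensions) on the right-hand side --- not an arbitrary $q$-weak upper gradient. In a complete space Cheeger's theorem identifies $\Lip(u,\cdot)$ with the minimal $q$-weak upper gradient and your Step III closes; but the entire point of this theorem is the non-complete setting, where no such comparison is available, and your bound $\int_{\lambda B}\Lip(u,\cdot)^q\,d\mu\le C\,\cp_q(E\cap\ol B,2B)$ for an almost-minimizer $u$ is unjustified. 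The paper bridges this gap by a detour you are missing: the improved inequality is first applied to explicit test functions (maxima of tent functions over a finite cover, where $\Lip$ is computable by hand) to derive a codimension-$q$ Hausdorff content density condition for $E\cap\ol{B}$, and then a separate chaining/covering argument converts that density condition into uniform $t$-fatness for any $t>q$ --- with the capacity defined via genuine weak upper gradients. The compactness of $E\cap\ol{B(w,r)}$ is used exactly there, to extract a finite subcover in the Hausdorff content estimate; it is not needed to "populate" the capacity with test functions, and it does not substitute for Cheeger's theorem in the way your Step III assumes.

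The second gap is the mechanism of Step II. Gehring's lemma upgrades a reverse H\"older inequality to \emph{higher} integrability of the gradient side; what is needed here is to \emph{lower} the exponent on both sides of the Hardy inequality from $p$ to $p-\varepsilon$, which is a different phenomenon, and your alternative dyadic truncation in the function values does not obviously produce it either. The working mechanism (Lewis, Koskela--Zhong) is a level-set argument: one shows that $u$ restricted to the good set $F_\lambda=\{|u|\le\lambda d_E\}\cap\{(M\,g^{p_0})^{1/p_0}\le\lambda\}$ is $C\lambda$-Lipschitz (this is where the $(1,p_0)$-Poincar\'e inequality at the lower exponent, i.e.\ the gap $p-p_0$, is consumed), extends by McShane, applies the $p$-Hardy inequality to the extension, and integrates over $\lambda$ against $\lambda^{-1-\varepsilon}\,d\lambda$, finishing with the maximal function theorem and an absorption step. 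Relatedly, your Step I as displayed yields only the boundary Poincar\'e inequality $\int_B|u|^p\le Cr^p\int g^p$; obtaining the genuinely weighted inequality with $d_E(x)^{-p}$ requires summing capacitary Poincar\'e estimates over a Whitney-type decomposition of the level sets of $d_E$ and then absorbing an error term, which in turn forces one to first truncate $E$ to a subset of $\ol B$ --- none of which appears in your outline. These are not cosmetic omissions: without the truncation the absorption fails, and without a correct Step II and the Hausdorff-content detour the proof does not reach uniform $q$-fatness in a non-complete space.
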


We will also formulate a slightly stronger
version of Theorem~\ref{t.self-impr-non-compl_Intro} later in Theorem~\ref{t.self-impr-non-compl}.
Recall that if $X$ is as in Theorem~\ref{t.self-impr} (i.e.,\ complete, equipped with a doubling measure
and supporting a $(1,p)$-Poincar\'e inequality), then a $(p,p_0)$-Poincar\'e inequality as in
Theorem~\ref{t.self-impr-non-compl_Intro} follows 
from the well-known self-improvement properties of Poincar\'e inequalities; see
Section~\ref{s.poinc} for more discussion. 

It should perhaps be noted here that 
we define the 
variational $p$-capacity
using Lipschitz test functions (the precise definition is given in
Section~\ref{s.cap}). If $X$ is complete, this definition
agrees with the definition using Newtonian (or Sobolev)
test functions, but in a non-complete space
the resulting capacities can be different.

Let us turn to an outline of the ideas behind the proofs of 
Theorems~\ref{t.self-impr} and~\ref{t.self-impr-non-compl_Intro}.
In~\cite{MR2854110} (see also~\cite[Theorem~3.3]{MR2723821}) it was shown (essentially) that
if $X$ is as in Theorem~\ref{t.self-impr}, then a closed set $E\subset X$ is uniformly
$p$-fat if and only if there is $C>0$ such that
the \emph{boundary $p$-Poincar\'e inequality}
\begin{equation}\label{e.b-poinc}
\int_{B(w,r)} \lvert u\rvert^{p}\,d\mu
\le Cr ^{p}\int_{B(w,\tau r)}  g^{p}\,d\mu\,
\end{equation}
holds for all $w\in E$ and all $r>0$, whenever
$u$ is a Lipschitz function in $X$ such that $u=0$ in $E$
and $g$ is a ($p$-weak) upper gradient of $u$
(in $\R^n$ one can always take $g=|\nabla u|$).
Hence to obtain the self-improvement of uniform $p$-fatness, it would suffice to
prove the self-improvement directly to inequality~\eqref{e.b-poinc}; 
this was actually mentioned in~\cite[p.~718]{MR2854110} as 
a possible and interesting approach to self-improvement.

We will not give a direct proof 
for the self-improvement 
of~\eqref{e.b-poinc}, but we show in Theorem~\ref{t.main} 
that if $X$ is as in Theorem~\ref{t.self-impr-non-compl_Intro} 
(in particular not necessarily complete) and
$E\subset X$ is uniformly $p$-fat, then
there exist $\eps>0$ and $C>0$ such that
the following $(p-\eps)$-version of~\eqref{e.b-poinc} holds 
%(even in the non-complete case) 
for all $w\in E$ and all $r>0$, whenever
$u$ is a Lipschitz function in $X$ such that $u=0$ in $E$:
\begin{equation}\label{e.ultimate_intro}
\int_{B(w,r)} \lvert u\rvert^{p-\varepsilon}\,d\mu
\le Cr ^{p-\varepsilon}\int_{B(w,\tau r)}  \Lip(u,\cdot)^{p-\varepsilon}\,d\mu\,.
\end{equation}
Here $\Lip(u,x)$ is the upper pointwise Lipschitz constant of $u$ at $x\in X$.
We remark that in $\R^n$ inequality~\eqref{e.ultimate_intro} can be obtained directly
with $|\nabla u|$ instead of $\Lip(u,\cdot)$ on the right-hand side.
More generally, if the space $X$ is complete, then $\Lip(u,\cdot)$ 
is actually known to be a minimal weak
upper gradient of $u$ by the results of Cheeger~\cite{MR1708448}.
%If the space $X$ is complete (or $\Lip(u,\cdot)$ is otherwise known to be a minimal
%weak upper gradient of $u$), 
Hence we can connect from inequality~\eqref{e.ultimate_intro}
back to uniform fatness, and now indeed to the better $(p-\eps)$-uniform fatness,
thus proving Theorem~\ref{t.self-impr}.
In~\cite{MR2854110} this connection was
established with the help of the so-called pointwise Hardy inequalities,  
but, for the sake of completeness, we show in Section~\ref{s.self-impr} 
how Theorem~\ref{t.self-impr} follows directly from 
the validity of~\eqref{e.ultimate_intro}. %for every $w\in E$ and all $r>0$,
In particular, this way we avoid the use of pointwise Hardy inequalities
in our proofs of Theorems~\ref{t.self-impr} 
and~\ref{t.self-impr-non-compl_Intro}, although it should be noted that
our general approach has been partially suggested and motivated by
these pointwise inequalities. 

In a non-complete space $X$ the validity of~\eqref{e.ultimate_intro} does not immediately
yield the uniform $(p-\eps)$-fatness of $E$. Nevertheless, using 
as an additional tool the
connection between uniform fatness and density conditions for suitable Hausdorff contents,
we show in Section~\ref{s.Hausd} how the improved boundary Poincar\'e inequality~\eqref{e.ultimate_intro}
can be used to prove also Theorem~\ref{t.self-impr-non-compl_Intro}.

It is the proof of~\eqref{e.ultimate_intro} (assuming uniform $p$-fatness) 
that constitutes the main challenge in our
proofs of Theorems~\ref{t.self-impr} and~\ref{t.self-impr-non-compl_Intro}. 
In fact, we will establish~\eqref{e.ultimate_intro} via
a self-improvement property of suitable local Hardy inequalities.
Recall that one of the consequences of the self-improvement of uniform fatness, 
noted in each of~\cite{MR1869615,MR946438,MR1386213},
is the validity of a $p$-Hardy inequality in the complement of 
a uniformly $p$-fat set $E\subset X$.
However, using a method originating from Wannebo~\cite{MR1010807}
(see also~\cite[Section~5]{MR2854110}), it is also possible to
prove such a $p$-Hardy inequality %in the complement of uniformly $p$-fat set $E$
without using the self-improvement of uniform fatness.
We use an adaptation of this latter method
together with a novel `local absorbtion argument' (Lemma~\ref{l.absorb}),
and prove in the end of Section~\ref{s.wannebo} 
the following local $p$-Hardy inequality when $E\subset X$ is uniformly 
$p$-fat.

\begin{theorem}\label{t.Wannebo_intro}
Let $1< p<\infty$ and let $X$ be a 
metric measure space equipped with a doubling measure $\mu$ and 
supporting a $(1,p)$-Poincar\'e inequality.
Assume that $E\subset X$ is a  uniformly $p$-fat closed set.
Then there exists a constant $C>0$ such that the
local $p$-Hardy inequality
\begin{equation}\label{e.int_intro}
\int_{B\setminus E}\bigg(\frac{\lvert u\rvert}{d_{E}}\bigg)^p\,d\mu 
\le C\int_{32\tau^2 B} g^p\,d\mu
\end{equation}
holds whenever $u$ is a Lipschitz function in $X$ such that $u=0$ in $E$,
%for all functions $u\in \Lip_0(X\setminus E)$, for
$g$ is a $p$-weak upper gradient of $u$, and $B=B(w,r)$ is a ball with $w\in E$ and $0<r< (1/32) \diam(X)$. 
\end{theorem}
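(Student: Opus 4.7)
The plan is to follow an adaptation of Wannebo's iteration method for passing from uniform $p$-fatness to a Hardy-type inequality, carefully avoiding any direct use of the full boundary $p$-Poincar\'e inequality~\eqref{e.b-poinc}. The reason for this detour is that in the generality of Theorem~\ref{t.Wannebo_intro} (in particular in Theorem~\ref{t.self-impr-non-compl_Intro}) we do not assume completeness of $X$, and the equivalence between uniform $p$-fatness and boundary Poincar\'e is not directly available. Instead, one establishes a \emph{weighted} local estimate with a small exponent perturbation $\beta>0$, and then eliminates the perturbation by means of the local absorption lemma (Lemma~\ref{l.absorb}).

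First I would fix a ball $B = B(w,r)$ with $w \in E$ and construct a Whitney-type decomposition $\{B_i = B(x_i, r_i)\}_i$ of the open set $B \setminus E$, with $r_i \approx d_E(x_i)$ and bounded overlap of suitable dilations $\lambda B_i$. For each Whitney ball, pick $w_i \in E$ with $d(x_i, w_i) \le 2 d_E(x_i)$, so that $\widetilde{B}_i = B(w_i, C r_i)$ contains $B_i$ and is centered on $E$. Uniform $p$-fatness of $E$ gives a quantitative lower bound on $\cp_p(E \cap \widetilde B_i, 2\widetilde B_i)$, which combined with the Lipschitz-test-function definition of $p$-capacity and the $(1,p)$-Poincar\'e inequality yields an estimate of the form
\[
\int_{B_i} |u|^p \, d\mu \le C r_i^p \int_{\sigma \widetilde{B}_i} g^p \, d\mu,
\]
for a dilation constant $\sigma$ controlled by $\tau$. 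Using $d_E \approx r_i$ on $B_i$, dividing by $r_i^p$ and summing with the bounded overlap property gives
\[
\int_{B \setminus E} \left( \frac{|u|}{d_E} \right)^p d\mu \le C \int_{\Omega'} g^p \, d\mu,
\]
where $\Omega' = \bigcup_i \sigma \widetilde B_i$.

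The main obstacle is controlling $\Omega'$. For Whitney balls $B_i$ near the boundary of $B$ — especially those with $r_i$ comparable to $r$ — the enlargement $\sigma \widetilde B_i$ can protrude from $B$, and the naive union $\Omega'$ need not lie inside $32\tau^2 B$. This is precisely where the novel local absorption lemma intervenes. The strategy is to set up the Wannebo scheme with the exponent perturbation $\beta>0$, which converts the excess contribution from the outermost Whitney layer into a weighted term of the form $\delta \int_{B^* \setminus E}(|u|/d_E)^p\,d\mu$ on a slightly larger ball $B^* \supset B$, with $\delta = \delta(\beta)$ small for small $\beta$. Lemma~\ref{l.absorb} then iterates the estimate over a geometric sequence of enlargements $B \subset B^* \subset B^{**} \subset \dots$ and sums the resulting geometric series, absorbing the weighted term on the right back into the left with a uniform constant.

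The iteration converges because uniform $p$-fatness, together with the doubling and $(1,p)$-Poincar\'e hypotheses, is scale invariant: the constants appearing at every iterate depend only on the fatness data of $E$, the doubling constant of $\mu$, and $\tau$. The restriction $r < (1/32)\diam(X)$ guarantees that the enlargements used in the absorption stay inside $X$ throughout the iteration, and a careful bookkeeping of the dilations (the factor $\sigma$ from the Whitney-to-boundary enlargement and the factor $\tau$ from the Poincar\'e-iteration step) produces exactly the claimed factor $32\tau^2$ on the right-hand side. The hard part is the construction and convergence analysis of the absorption iteration; everything else is essentially a Whitney-type summation.
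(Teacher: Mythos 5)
The key summation step in your argument fails. After the single-ball estimate $\int_{B_i}|u|^p\,d\mu \le C r_i^p \int_{\sigma\widetilde B_i} g^p\,d\mu$, you divide by $r_i^p$ and sum, invoking ``bounded overlap'' of the dilated balls $\sigma\widetilde B_i$. But the dilation $\sigma$ needed here is large (the ball $\widetilde B_i$ must be re-centered on $E$, which already triples the radius, and the capacitary Poincar\'e inequality introduces a further factor $\tau$), and for such dilations a point $x$ with $d_E(x)\approx 2^{-k}r$ lies in $\sigma\widetilde B_i$ for Whitney balls $B_i$ of \emph{every} scale $2^{-m}r$ with $0\le m\lesssim k$. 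The overlap is therefore unbounded near $E$, and the total weight that $x$ receives in the sum is $\sum_{m\lesssim k}(2^{-m}r)^{-p}\approx d_E(x)^{-p}$. What your summation actually yields is $\int_{B\setminus E}(|u|/d_E)^p\,d\mu\le C\int g^p\,d_E^{-p}\,d\mu$, with a singular weight on the right-hand side, not the claimed inequality. This is the real obstacle in Wannebo's method --- not the protrusion of boundary-layer Whitney balls out of $B$, which only costs a fixed enlargement of the ball on the right and is harmless.

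The paper resolves this by applying the layer-by-layer fatness estimate not to $u$ but to $v=u\,d_{E_B}^{\beta/p}$ and summing with weights $2^{m(p+\beta)}$: the resulting geometric factor $\sum_{m\le k+\ell}2^{m\beta}\approx\beta^{-1}2^{k\beta}$ tames the singularity, and the Leibniz rule $g_v\le g\,d_{E_B}^{\beta/p}+(\beta/p)|u|\,d_{E_B}^{\beta/p-1}$ returns a term $C\beta^{p-1}\int(|u|/d_{E_B})^p$ on the right that is absorbed by taking $\beta$ small. Two further ingredients you omit are needed to make this absorption legitimate: (i) the truncation $E_B\subset\overline B$ of Lemma~\ref{l.truncation}, which guarantees $d_{E_B}\gtrsim r$ outside $2B$, so that the term to be absorbed, living on the larger ball $8\tau B$, can be reduced via the Poincar\'e inequality to the ball where the left-hand side lives --- this is the actual content of Lemma~\ref{l.absorb}, a single application rather than an iteration; and (ii) fatness must be invoked on balls $\widehat B$ where $E$ and $E_B$ coincide (Lemma~\ref{l.pallot}). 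Your description of the absorption as a convergent geometric iteration over enlargements $B\subset B^*\subset B^{**}\subset\dotsb$ does not correspond to a working mechanism: no per-stage smallness is identified that would make such a series converge, and the $\beta$-perturbation is needed for the scale summation near $E$, not for controlling the outermost Whitney layer.
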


Above we have abbreviated $d_{E}(x)=\dist(x,E)$. 
Notice in particular that we do not 
need to assume in Theorem~\ref{t.Wannebo_intro} that the space $X$ is complete.

The next step towards~\eqref{e.ultimate_intro} 
is a self-improvement property for 
 local $p$-Hardy inequalities~\eqref{e.int_intro}. 
Here we need the assumption that $X$ supports
a $(p,p_0)$-Poincar\'e inequality for some $1<p_0<p$
(actually, it suffices to assume that $X$ supports $(q,q)$-Poincar\'e inequalities 
for all $p_0\le q\le p$, with uniform constants). 
In this case there exists $\eps>0$ such that a version of inequality~\eqref{e.int_intro}
holds with the exponent $p-\eps$, but now with the $p$-weak
upper gradient $g$ on the right-hand side of~\eqref{e.int_intro}
replaced with the upper pointwise Lipschitz constant $\Lip(u,\cdot)$;
see Proposition~\ref{p.improved}.
The proof of this self-improvement for local Hardy inequalities is based on ideas used by
Koskela and Zhong~\cite{MR1948106} in connection with the self-improvement
of usual $p$-Hardy inequalities; the ideas in~\cite{MR1948106} were, in turn,
inspired by the work of Lewis~\cite{MR1239922}.
Again the absorbtion Lemma~\ref{l.absorb} is
needed to obtain the local inequalities. 
The $(p-\eps)$-version of the local Hardy inequality now easily yields
the $(p-\eps)$-version of the boundary Poincar\'e inequality~\eqref{e.ultimate_intro},
see Section~\ref{s.self-impr}, concluding the proofs of Theorems~\ref{t.self-impr} 
and~\ref{t.self-impr-non-compl_Intro}.

Admittedly, the proofs 
that were outlined above are somewhat lengthy and
in many places still quite technical and delicate in the level of details, 
but one could argue that our general approach is nevertheless based on rather 
`elementary' (or `standard') tools.
In particular, we do not need any sophisticated prerequisites 
concerning potential theory and we can also avoid completely the use of 
Cheeger's deep theory---or, if this theory is used to give a more direct proof to
Theorem~\ref{t.self-impr}, the use is very explicit and localized; cf.\ 
the proof of Theorem~\ref{t.self-impr} at the end of Section~\ref{s.self-impr}.
In this sense we believe that our proof of Theorem~\ref{t.self-impr} is 
more transparent (also in $\R^n$)
than its predecessors and thus hopefully easier to adapt to further problems, for instance 
in connection with weighted capacities
or capacities of fractional order smoothness.

\section{Preliminaries}\label{s.prelim}

\subsection{Metric spaces}

We assume throughout the paper that 
$X=(X,d,\mu)$ is a metric measure space equipped with a metric $d$ and a 
positive complete Borel
measure $\mu$ such that $0<\mu(B(x,r))<\infty$
for all balls  $B=B(x,r)=\{y\in X\,:\, d(y,x)<r\}$. 
As in \cite[p.~2]{MR2867756},
we extend $\mu$ as a Borel regular (outer) measure on $X$.
In particular, the space $X$ is separable.
Let us emphasize that 
we do not, in general, require $X$ to be complete. 
If completeness is needed somewhere in the paper, we will
mention this explicitly.

We also assume that the measure $\mu$ is {\em doubling},
meaning that there is a constant $C_D\ge 1$, called
the {\em doubling constant of $\mu$}, such that
\begin{equation}\label{e.doubling}
\mu(2B) \le C_D\, \mu(B)
\end{equation}
for all balls $B=B(x,r)$ of $X$. 
Here we use for $0<t<\infty$ the notation $tB=B(x,tr)$. 
When $A\subset X$, we let $\ol A$ denote the closure of $A$,
and hence $\ol B$ always refers to the closure of the ball $B$, 
not to the corresponding closed ball.

Let $A\subset X$.
A function $u\colon A\to \R$ is said to be \emph{($L$-)Lipschitz}, for $0\le L<\infty$, if
\[
|u(x)-u(y)|\le L d(x,y)\qquad \text{ for all } x,y\in A\,.
\]
If $u:A\to \R$ is an $L$-Lipschitz function, then the classical McShane extension
\begin{equation}\label{McShane}
\tilde u(x)=\inf_{y\in A} \{ u(y) + L d(x,y)\}\,,\qquad x\in X\,,
\end{equation}
defines an $L$-Lipschitz function $\tilde u:X\to \R$
which satisfies
$\tilde u|_A = u$.
The set of all Lipschitz functions $u\colon A\to\R$
is denoted by $\Lip(A)$, and
\[
\Lip_0(A) = \{ u\in  \Lip(X) \,:\, u=0 \text{ in } X\setminus A\}\,.
\]

\subsection{(Weak) upper gradients}
By a {\em curve} we mean a nonconstant, rectifiable, continuous
mapping from a compact interval to $X$.
We say that a Borel function $g\ge 0$ on $X$ is an 
{\em upper gradient} of an extended real-valued function $u$ on
$X$, if for all curves $\gamma$ joining 
arbitrary points $x$ and $y$ in $X$ we have
\begin{equation}\label{e.modulus}
\lvert u(x)-u(y)\rvert \le \int_\gamma g\,ds\,,
\end{equation}
whenever both $u(x)$ and $u(y)$ are finite, and $\int_\gamma g\,ds =\infty$
otherwise. 
In addition, when $1\le p<\infty$, a measurable function $g\ge 0$ on $X$ is a {\em $p$-weak upper gradient}
of an extended real-valued function $u$ on $X$ 
if inequality~\eqref{e.modulus} holds for $p$-almost every curve $\gamma$ joining arbitrary points $x$ and $y$ in $X$;
that is, there exists a non-negative Borel function $\rho\in L^p(X)$ such that 
$\int_\gamma \rho\,ds=\infty$ whenever~\eqref{e.modulus} does not hold for the curve $\gamma$.
We refer to~\cite{MR2867756} for more information on $p$-weak upper gradients.

When $u$ is a (locally) Lipschitz function on $X$, 
the \emph{upper pointwise Lipschitz constant} of $u$ at $x\in X$
is defined as
\begin{equation}\label{e.L_constant}
\Lip(u,x)=\limsup_{r\to 0} \sup_{y\in B(x,r)} \frac{\lvert u(y)-u(x)\rvert}{r}\,.
\end{equation}
The Borel function $\Lip(u,\cdot)$
is an upper gradient of $u$; cf.~\cite[Proposition~1.14]{MR2867756}.  
Moreover, if $X$ is complete and $1<p<\infty$, then $\Lip(u,\cdot)$ is actually 
a so-called minimal $p$-weak upper gradient of $u$ 
(in particular, this implies that $\Lip(u,\cdot)\le g$ a.e.\ whenever $g\in L^p(X)$ is a
$p$-weak upper gradient of $u$).
This is a deep result of Cheeger, we refer to~\cite[Theorem~6.1]{MR1708448} and~\cite[p.~342]{MR2867756}.

\subsection{Poincar\'e inequalities}\label{s.poinc}
We say that the space $X$ supports a {\em  $(q,p)$-Poincar\'e inequality}, for $1\le q,p<\infty$,  
if there exist constants $C>0$ and $\lambda\ge 1$
such that for all balls $B\subset X$, all measurable
functions $u$ on $X$, and for all $p$-weak upper gradients $g$ of $u$,
\begin{equation}\label{e.poincare}
\biggl(\vint_B\lvert u-u_B\rvert^q\,d\mu\biggr)^{p/q} 
\le C\diam(B)^p \vint_{\lambda B} g^p\,d\mu\,. 
\end{equation} 
Here
\[
u_B=\vint_B u\,d\mu = \frac{1}{\mu(B)} \int_B u\,d\mu
\]
is the integral average of $u$ over the ball $B$, and
the left-hand side of~\eqref{e.poincare} is interpreted as $\infty$ whenever $u_B$ is not defined. 
We remark that $X$ supports a $(q,p)$-Poincar\'e inequality with constants $C>0$ and $\lambda\ge 1$ if, and only if,
inequality~\eqref{e.poincare} holds for all balls $B\subset X$, all functions $u\in L^1(X)$, and all upper
gradients $g$ of $u$; see \cite[Proposition~4.13]{MR1708448}.

If $1<p<\infty$ and $X$ supports a $(1,p)$-Poincar\'e inequality (and the measure $\mu$ is doubling,
as we assume throughout the paper),
then $X$ supports also a $(p,p)$-Poincar\'e inequality; see \cite[Corollary~4.24]{MR2867756}. 
If in addition $X$ is complete,
then there is an exponent $1< p_0<p$ such that $X$ supports
a $(p,p_0)$-Poincar\'e inequality 
and, consequently, also $(q,q)$-Poincar\'e inequalities with uniform constants whenever $p_0\le q\le p$;
for details we refer to~\cite{MR2415381} (see also \cite[Theorem~4.30]{MR2867756}) and to \cite[Theorem~4.21]{MR2867756}. Therefore the following (PI) condition, for a complete space $X$ 
supporting a $(1,p)$-Poincar\'e inequality,
is valid with the above exponents $1<p_0<p$.

However, since we do not in general assume that $X$ is complete, we
use in many of our results the following \emph{a priori} assumption
concerning the validity of (improved) Poincar\'e 
inequalities with uniform constants: 
\begin{itemize}
\item[(PI)] 
Let $1<p<\infty$ be given. We assume that there are $1<p_0<p$,
$C_P>0$ and $\tau \ge 1$ such that $X$ supports the $(q,q)$-Poincar\'e 
inequality
\begin{equation}\label{e.all_q}
\vint_B\lvert u-u_B\rvert^q\,d\mu\le C_{P}\diam(B)^q \vint_{\tau B} g^q\,d\mu
\end{equation}
for every $p_0\le q\le p$.
\end{itemize}
For simplicity, we will in the sequel use Poincar\'e inequalities 
with the constants  $C_P>0$ and $\tau \ge 1$.
Indeed, if only a $(1,p)$-Poincar\'e inequality is assumed, this is
just a matter of notation (in this case we may use both $(1,p)$-Poincar\'e and $(p,p)$-Poincar\'e inequality 
with the above constants). And if (PI) is assumed, the
$(1,q)$-Poincar\'e inequalities 
(with $C_P>0$  and $\tau \ge 1$) for $p_0\le q\le p$ are all trivial consequences of~\eqref{e.all_q}
and H\"older's inequality.

\subsection{Capacity and fatness}\label{s.cap}
Let $\Omega\subset X$ be a bounded open set and let $K\subset \Omega$ 
be a closed set.
We define the {\em (Lipschitz) variational $p$-capacity} of $K$ with respect to $\Omega$ to be
\begin{equation}\label{e.cap}
\cp_p(K,\Omega)=\inf\int_\Omega g^p\,d\mu\,,
\end{equation}
where the infimum is taken over all functions $u\in \Lip_0(\Omega)$,
such that $u\ge 1$ in $K$, and all $p$-weak upper gradients $g$ of $u$. 
If there are no such functions $u$, we set $\cp_p(K,\Omega)=\infty$.

\begin{remark}\label{e.equiv_cap}
If $\cp_p(K,\Omega)<\infty$, 
then the infimum in~\eqref{e.cap} can be restricted to
$u\in \Lip_0(\Omega)$ satisfying $\chi_K \le u\le 1$
and to $p$-weak upper gradients $g$ of $u$ such that $g=g\chi_{\Omega}\in L^p(X)$.
Indeed, if $u$ is an admissible test function for $\cp_p(K,\Omega)$ and $g$
is a $p$-weak upper gradient of $u$ such that $g\in L^p(\Omega)$, then 
$\tilde u=\max\{0,\min\{1,u\}\}$ belongs to $\Lip_0(\Omega)$ and
$\chi_K\le \tilde u\le 1$ on $X$. Moreover, the function $g$ is clearly a $p$-weak upper gradient of $\tilde u$.
By the glueing lemma \cite[Lemma~2.19]{MR2867756}, we may further assume that $g=0$ outside $\Omega$.
(Actually, since $g$ need not belong to $L^p(X)$ but this is needed in the glueing lemma, we first define a function
\[\tilde g=g\chi_{\Omega}+\Lip(\tilde u,\cdot)\chi_{X\setminus \Omega}\in L^p(X)\] that is a $p$-weak upper gradient
of $\tilde u$, cf.\ the proof of \cite[Theorem~2.6]{MR2867756}. Now the glueing lemma applies, with $\tilde g$,
showing that $g\chi_\Omega$ is a $p$-weak upper gradient of $\tilde u$.) 
\end{remark}

Let us  remark here that 
if the metric space $X$ is complete and supports a $(1,p)$-Poincar\'e inequality, then the above definition of $\cp_p(K,\Omega)$
is equivalent to the definition where the function $u$ is assumed to
belong to the \emph{Newtonian space} $N^{1,p}_0(\Omega)$.
However, we will not use the theory of Newtonian spaces in this paper, but rather 
refer to~\cite{MR2867756} for an introduction and basic properties
of Newtonian functions. In particular, 
see \cite[Theorem~6.19(x)]{MR2867756} for the above-mentioned equivalence of 
capacities in the complete case.

On the other hand, if $X=\R^n$, equipped with the Euclidean metric and the Lebesgue measure
(or more generally a $p$-admissible weight, see~\cite{MR1207810,MR1386213}),
then by standard approximation
\begin{equation}\label{e.cap_rn}
\cp_p(K,\Omega)=\inf\biggr\{\int_\Omega |\nabla u|^p\,dx : u\in C_0^\infty(\Omega),\ u\ge 1 \text{ in } K\biggr\}\,
\end{equation}
for all closed (compact) $K\subset\Omega$, and therefore $\cp_p(K,\Omega)$ is the usual
variational $p$-capacity of $K$.
In this case all our results (and computations) concerning
Lipschitz functions and their $p$-weak upper gradients (or upper pointwise Lipschitz
constants) can be restated using functions in $C_0^\infty(\Omega)$ and
the norms of their gradients. We recall that our approach is new even in this
special case.

We say that a closed set $E\subset X$ is {\em uniformly $p$-fat}, 
for $1\le p<\infty$, if there
exists a constant $0<c_0\le1$ such that
\begin{equation}\label{e.fatness}
\cp_p(E\cap \overline{B(x,r)},B(x,2r))\ge c_0 \cp_p(\overline{B(x,r)}, B(x,2r))
\end{equation}
for all $x\in E$ and all $0<r<(1/8)\diam(X)$.
If there exists a constant $r_0>0$ such that
condition~\eqref{e.fatness} holds for all
$x\in E$ and all $0<r<r_0$, the closed set $E$ is said to be
{\em locally uniformly $p$-fat}.

\begin{remark}
Both  Theorem \ref{t.self-impr} and Theorem \ref{t.self-impr-non-compl_Intro} are formulated in terms of
uniform fatness. However, the corresponding results
are valid also when `uniform fatness' is replaced
 by `local uniform fatness' (in the assumptions with exponent $p$ and in the conclusions
with exponent $q$).
In the sequel, we will exclusively focus on the case of uniformly fat sets.
The minor modifications (required
throughout the paper) in the local case are straightforward.
\end{remark}

The self-improvement of uniform $p$-fatness (that is formulated, e.g., in Theorem \ref{t.self-impr})
is  critical in various applications; examples 
beyond the scope of Hardy inequalities include global higher integrability of 
both the gradients of solutions to PDE's \cite{MR1302151,MR1386213} and
the upper gradients of certain quasiminimizers in metric measure spaces \cite{MR2418304}. In \cite{MR2564934} a quite simple proof
for the self-improvement of uniform $Q$-fatness is provided 
in the setting of Ahlfors $Q$-regular metric measure spaces.
%Besides these works, the uniform $p$-fatness condition
%appears, for instance, in \cite{MR962097,MR780609}.

In the Euclidean space $\R^n$, the self-improvement property is known to hold
also for more general $(\alpha,p)$-fatness conditions related to Riesz capacities
by the results of Lewis~\cite{MR946438}. For $\alpha=1$ these conditions are equivalent 
to the uniform $p$-fatness; cf.~\cite[p.~902]{MR1302151}.

In the rest of this paper (and hence in particular in our proof of the self-improvement of uniform fatness),
we only need the following two basic facts concerning the variational $p$-capacity, 
which hold under the assumption that the space $X$ supports a $(1,p)$-Poincar\'e inequality
(and hence also a $(p,p)$-Poincar\'e inequality).
First, there is a constant $C>0$ such that,
for each Lipschitz function $u$ on $X$, all $p$-weak upper gradients $g$ of $u$, and
for all balls $B\subset X$, we have
\begin{equation}\label{e.capacity_P}
\vint_B \lvert u\rvert^p\,d\mu \le \frac{C}{\cp_p(\overline{2^{-1}B}\cap \{u=0\}, B\}} \int_{\tau B} g^p\,d\mu\,.
\end{equation}
Here $\{u=0\}=\{x\in X\,:\, u(x)=0\}$ and $\tau$ is the dilatation 
from the $(p,p)$-Poincar\'e inequality~\eqref{e.all_q}.
%which, in turn, 
%is needed in the proof of inequality~\eqref{e.capacity_P}.
This `capacitary Poincar\'e inequality' is
in the classical Euclidean case due to Maz'ya~\cite[Ch.~10]{MR817985}. 
For the metric
space version, cf.~\cite[Proposition~6.21]{MR2867756}.

The second fact is a comparison
between $p$-capacity and measure.
Namely, there is a constant $C>0$ such that for all
balls $B=B(x,r)$ with $0<r<(1/8)\diam(X)$ and for each 
closed set $E\subset \overline{B}$, 
\begin{equation}\label{e.comparison}
\frac{\mu(E)}{C\, r^p}\le \cp_p(E,2B)\le \frac{C_D\,\mu(B)}{r^p}\,;
\end{equation} 
see, for instance \cite[Proposition~6.16]{MR2867756}. The 
$(1,p)$-Poincar\'e inequality
is needed to ensure the validity of 
the lower bound in
inequality~\eqref{e.comparison}.

\subsection{Tracking constants}\label{s.constants}

Our results are based on 
quantitative estimates and absorption arguments, where
it is often crucial to track the dependencies of constants quantitatively.
For this purpose, we will use the following notational convention:
\begin{itemize}
\item
$C_{X,\ast,\dotsb,\ast}$ denotes a positive constant which quantitatively 
depends on the quantities indicated by the $\ast$'s and (possibly) on:
the doubling constant $C_D$ of the measure $\mu$ in~\eqref{e.doubling},
the constants $C_{P}$ and $\tau$ appearing in the $(q,q)$-Poincar\'e 
inequalities~\eqref{e.all_q} and
the constants appearing in
the capacitary Poincar\'e inequality~\eqref{e.capacity_P} and
the comparison inequality~\eqref{e.comparison}.
\end{itemize}
Observe that $C_{X,\ast,\dotsb,\ast}$ can implicitly depend on $p$
via the estimates in inequalities~\eqref{e.all_q}, \eqref{e.capacity_P} 
and~\eqref{e.comparison}.
However, any further dependencies on the exponent $p$ will be explicitly indicated.

\section{Improved boundary Poincar\'e inequalities}\label{s.self-impr}

{\emph{Recall, for the rest of the paper, that we assume 
$X$ to be a metric space (not necessarily complete)
equipped with a doubling measure $\mu$.
Further assumptions, concerning e.g.\ the validity of Poincar\'e
inequalities, will be stated separately in each of the following results.}

\medskip

Our proof of the self-improvement of uniform fatness is based on the following 
improved boundary Poincar\'e inequalities.

\begin{theorem}\label{t.main}
Let $1<p<\infty$
and suppose that $X$ supports  the improved $(q,q)$-Poincar\'e inequalities
(PI) for $p_0\le q\le p$. 
Assume that $E\subset X$ is a uniformly $p$-fat closed set.
Then there exists constants
$0<\varepsilon<p-p_0$ and  $C>0$, quantitatively, such that inequality
\begin{equation*}
\int_{B(w,\rho)} \lvert u\rvert^{p-\varepsilon}\,d\mu
\le C\rho ^{p-\varepsilon}\int_{B(w,\tau \rho)} \Lip(u,\cdot)^{p-\varepsilon}\,d\mu
\end{equation*}
holds whenever $w\in E$, $\rho>0$, and $u\in \Lip_0(X\setminus E)$.
\end{theorem}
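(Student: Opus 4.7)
The plan is to deduce Theorem~\ref{t.main} by coupling the local $p$-Hardy inequality of Theorem~\ref{t.Wannebo_intro} with a self-improvement argument, and then extracting the boundary Poincar\'e inequality from the improved Hardy inequality via a trivial pointwise distance bound.

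First I would invoke Theorem~\ref{t.Wannebo_intro}, which applies because $E$ is uniformly $p$-fat and (PI) yields a $(1,p)$-Poincar\'e inequality via H\"older's inequality. This gives, for every Lipschitz $u$ vanishing on $E$, every $p$-weak upper gradient $g$ of $u$, and every ball $B=B(w,\rho)$ with $w\in E$ (and $\rho$ sufficiently small), the local $p$-Hardy inequality
\begin{equation*}
\int_{B\setminus E}\bigg(\frac{|u|}{d_E}\bigg)^p\,d\mu \le C\int_{32\tau^2 B} g^p\,d\mu.
\end{equation*}

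The main obstacle is then to self-improve this to a $(p-\varepsilon)$-version, with $\Lip(u,\cdot)$ on the right-hand side: to produce, quantitatively, $\varepsilon\in (0,p-p_0)$, $C'>0$, and $\sigma\ge 1$ such that
\begin{equation*}
\int_{B\setminus E}\bigg(\frac{|u|}{d_E}\bigg)^{p-\varepsilon}\,d\mu \le C'\int_{\sigma B}\Lip(u,\cdot)^{p-\varepsilon}\,d\mu.
\end{equation*}
To this end I would adapt the Koskela--Zhong method from~\cite{MR1948106} to the local setting: apply the $p$-Hardy inequality to a power-type modification $v=F(|u|)$ of $u$, with $F$ chosen so that $v$ remains Lipschitz, vanishes on $E$, and satisfies $\Lip(v,\cdot)\lesssim F'(|u|)\,\Lip(u,\cdot)$ pointwise. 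After a H\"older step tuned to an exponent $q$ slightly less than $p$---available precisely because (PI) supplies $(q,q)$-Poincar\'e inequalities uniformly for $q\in[p_0,p]$---one is left with a lower-order integral on the right-hand side that must be re-absorbed into the left-hand side. Here the localized absorption Lemma~\ref{l.absorb} is the essential tool, since the usual global absorption argument is unavailable for a purely local inequality. The appearance of $\Lip(u,\cdot)$ on the right is natural, as the Koskela--Zhong machinery ultimately relies only on the fact that $\Lip(v,\cdot)$ is an upper gradient of $v$, which holds without any completeness assumption on $X$.

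With the improved local Hardy inequality in hand, the conclusion of Theorem~\ref{t.main} is immediate. Since $w\in E$, any $x\in B(w,\rho)$ satisfies $d_E(x)\le d(x,w)<\rho$. Because $u=0$ on $E\cap B(w,\rho)$, we have the pointwise bound
\begin{equation*}
|u(x)|^{p-\varepsilon} \le \rho^{p-\varepsilon}\bigg(\frac{|u(x)|}{d_E(x)}\bigg)^{p-\varepsilon} \qquad \text{for } x\in B(w,\rho)\setminus E.
\end{equation*}
Integrating this bound over $B(w,\rho)$ and applying the improved local Hardy inequality yields
\begin{equation*}
\int_{B(w,\rho)}|u|^{p-\varepsilon}\,d\mu \le \rho^{p-\varepsilon}\int_{B(w,\rho)\setminus E}\bigg(\frac{|u|}{d_E}\bigg)^{p-\varepsilon}\,d\mu \le C\rho^{p-\varepsilon}\int_{\sigma B(w,\rho)}\Lip(u,\cdot)^{p-\varepsilon}\,d\mu,
\end{equation*}
which is the asserted inequality, the dilation factor $\sigma$ playing the role of $\tau$ in the statement of the theorem.
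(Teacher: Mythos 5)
Your overall architecture matches the paper's: a local $p$-Hardy inequality, a Koskela--Zhong style self-improvement with a local absorption step, and finally the trivial pointwise bound $d(x,w)<\rho$ to pass to the boundary Poincar\'e inequality. The last step is fine. But there is a genuine gap in the middle: you work throughout with the distance to the full set $E$, and you never introduce the truncated set $E_B\subset E\cap\overline B$ of Lemma~\ref{l.truncation}. This truncation is not cosmetic; it is exactly what makes Lemma~\ref{l.absorb} applicable. The absorption argument controls the Hardy quotient on the annulus $\sigma B^*\setminus B^*$ by $r^{-q}\lvert u\rvert^q$, which works only because $E_B\subset\overline B$ forces $d_{E_B}\gtrsim r$ there. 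With the untruncated $E$, which may extend far outside $B$, points of the annulus can be arbitrarily close to $E$, the quotient $\lvert u\rvert^q/d_E^q$ is not dominated by $r^{-q}\lvert u\rvert^q$, and the absorption fails. A second obstruction of the same origin: in the improvement step one must apply the local $p$-Hardy inequality to the McShane extensions $\tilde u$ of $u$ restricted to the good sets $F_\lambda$; these extensions vanish only on a neighbourhood of $E_B$, not on all of $E$, so the $E$-version of the Hardy inequality (Theorem~\ref{t.Wannebo_intro}) cannot be invoked for them. The paper's proof therefore runs the entire argument for $d_{E_B}$ (Propositions~\ref{p.Wannebo} and~\ref{p.improved}) and only at the very end uses $d_{E_B}(x)\le d(x,w)<\rho$, which is just as good for the final estimate since $w\in E_B$.

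Two smaller points. Your description of the improvement step as applying the Hardy inequality to a power-type modification $v=F(\lvert u\rvert)$ conflates two different mechanisms: the weight $v=u\,d_{E_B}^{\beta/p}$ appears in the Wannebo-type proof of the local $p$-Hardy inequality itself, whereas the $(p-\varepsilon)$-improvement proceeds by truncating $u$ to the level sets $F_\lambda=H_\lambda\cap G_\lambda$ (defined via $\lvert u\rvert\le\lambda d_{E_B}$ and a restricted maximal function of $\Lip(u,\cdot)^{p_0}$), extending by McShane, applying the $p$-Hardy inequality for each $\lambda$, integrating against $\lambda^{-1-\varepsilon}\,d\lambda$, and then absorbing; it is unclear that a pure power modification yields the gain in the exponent. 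Finally, the theorem asserts the inequality for all $\rho>0$, while Theorem~\ref{t.Wannebo_intro} carries the restriction $r<(1/32)\diam(X)$; this is handled by first reducing to $\rho<(3/2)\diam(X)$ and rescaling $r=\rho/(12\tau^2)$, a detail you should not leave implicit.
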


\begin{proof}
Fix $w\in E$, a radius $\rho>0$, and
a function $u\in \Lip_0(X\setminus E)$. 
Clearly, we may assume that $\rho< (3/2)\diam(X)$.
It is convenient to write $r=\rho/(12\tau^2)$ and $B=B(w,r)$.
Let us assume, for the time being, that
$0<\varepsilon<p-p_0$ is given and
 $E_B\subset E\cap \overline{B}$ is {\em any} closed set such that $w\in E_B$.
Since
\begin{align*}
\int_{B(w,\rho)} \frac{\lvert u(x)\rvert^{p-\varepsilon}}{\rho^{p-\varepsilon}}\,d\mu(x)\le
\int_{B(w,\rho)\setminus E_{B}} \frac{\lvert u(x)\rvert^{p-\varepsilon}}{d_{E_{B}}(x)^{p-\varepsilon}}\,d\mu(x)\,,
\end{align*}
it suffices to find quantitative constants $0<\varepsilon<p-p_0$ and $C>0$
(and a closed set $E_{B}\subset E$ as above) such that 
\begin{equation}\label{e.impr}
\int_{B(w,\rho)\setminus E_{B}} \frac{\lvert u(x)\rvert^{p-\varepsilon}}{d_{E_{B}}(x)^{p-\varepsilon}}\,d\mu(x)
\le C\int_{B(w,\tau\rho)} \Lip(u,x)^{p-\varepsilon}\,d\mu(x)\,.
\end{equation}
We establish this improved local Hardy inequality
below in Proposition~\ref{p.improved}, and this proves the theorem.
Let us remark here that the proof of Proposition~\ref{p.improved} 
is rather involved and divided in Section~\ref{s.proof} to the following three stages:
`Truncation' in~\S\ref{s.truncation}, `Local Hardy' in~\S\ref{s.wannebo}, and `Improvement' in~\S\ref{s.improvement}.
\end{proof}

From Theorem~\ref{t.main} (that is based on postponed Proposition~\ref{p.improved}) we obtain the following estimate for
the capacity test-functions related to $\cp_p(E\cap \ol B,2B)$.
This estimate will be used in various settings to prove the self-improvement
of uniform fatness. 

\begin{proposition}\label{p.almostfat}
Let $1<p<\infty$
and suppose that $X$ supports the improved $(q,q)$-Poincar\'e inequalities
(PI) for $p_0\le q\le p$. 
Assume that $E\subset X$ is a uniformly $p$-fat closed set.
Then there exist constants $C>0$ and
$0<\varepsilon<p-p_0$, quantitatively, such that 
for all balls $B=B(w,R)$, with $w\in E$ and $0<R<(1/8)\diam(X)$, 
and for all functions $v\in \Lip_0(2B)$, with $0\le v\le 1$ and
$v=1$ in $E\cap \overline{B}$, it holds that
\begin{equation}\label{eq: goal}
\mu(B)R^{-(p-\eps)} \le C \int_{2B} \Lip(v,\cdot)^{p-\varepsilon}\,d\mu\,.
\end{equation}
\end{proposition}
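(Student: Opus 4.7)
The plan is to apply the improved boundary Poincar\'e inequality of Theorem~\ref{t.main} to a Lipschitz function $u\in\Lip_0(X\setminus E)$ built from $v$. A natural candidate is
$$
u(x) \;=\; \min\bigl\{1-v(x),\ d(x,E)/R\bigr\}.
$$
Since $d(\cdot,E)$ vanishes on $E$, we have $u\in\Lip_0(X\setminus E)$, and the standard rule for minima of Lipschitz functions gives $\Lip(u,\cdot)\le\max\{\Lip(v,\cdot),\,1/R\}$ almost everywhere.

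I would then apply Theorem~\ref{t.main} to this $u$ at a radius $\rho$ comparable to $R$ (chosen so that $B(w,\tau\rho)\subseteq 2B$; for instance $\rho=2R/\tau$ when $\tau\ge 2$, adjusting via doubling otherwise). Splitting the bound on $\Lip(u,\cdot)^{p-\varepsilon}$ and invoking the doubling property to compare $\mu(B(w,\rho))$ with $\mu(B)$ yields
$$
\int_{B(w,\rho)} u^{p-\varepsilon}\,d\mu \;\le\; CR^{p-\varepsilon}\int_{2B}\Lip(v,\cdot)^{p-\varepsilon}\,d\mu \;+\; C_0\,\mu(B),
$$
where the $C_0\,\mu(B)$ error term arises from the $1/R$ contribution to $\Lip(u,\cdot)$.

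To conclude, one needs a matching lower bound $\int_{B(w,\rho)} u^{p-\varepsilon}\,d\mu \ge c_1\mu(B)$ with $c_1>C_0$ so that the error is absorbed into the left-hand side. On the set $\{x\in B:\,v(x)\le 1/2,\ d(x,E)\ge R/2\}$ one has $u\ge 1/2$, and the task reduces to showing that this set occupies a definite fraction of $\mu(B)$. The main obstacle is that either $v$ may be close to $1$ throughout $B$, or $E$ may be geometrically thick in $B$ at small scales, either of which shrinks the natural candidate set. My expectation is that one handles these obstructions by rescaling the distance factor to $K\,d(\cdot,E)/R$ for a large parameter $K$ and then invoking the local absorption argument (Lemma~\ref{l.absorb}), iterated across scales if necessary, to eliminate the residual $\mu(B)$ terms---in parallel with the absorption schemes used in the proofs of Theorem~\ref{t.Wannebo_intro} and Proposition~\ref{p.improved}. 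Tracking the quantitative constants carefully so that $c_1>C_0$ is ultimately achieved is, I expect, the principal technical challenge.
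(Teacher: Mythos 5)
Your overall strategy---apply Theorem~\ref{t.main} to a function of the form $\min\{1-v,\cdot\}$ and recover $\mu(B)$ from a set where $u$ is bounded below---is the same as the paper's, but your specific construction creates an error term that your proposal does not actually eliminate, and the fix you sketch would not work. Taking $u=\min\{1-v,\,d(\cdot,E)/R\}$ forces $\Lip(u,\cdot)$ to pick up the constant $1/R$ on a set of measure comparable to $\mu(B)$, so after applying Theorem~\ref{t.main} you are left with $C_0\,\mu(B)$ on the right, where $C_0$ contains the (possibly large) constant of Theorem~\ref{t.main} and doubling constants. Since $0\le u\le 1$, the best possible lower bound for $\int u^{p-\varepsilon}\,d\mu$ is of order $\mu(B)$ with constant at most $1$, so you cannot arrange $c_1>C_0$. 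Rescaling to $K\,d(\cdot,E)/R$ makes this strictly worse: the error term scales like $K^{p-\varepsilon}$ while the left side remains capped by $\mu(B)$ (and shrinking $K$ scales both sides by the same factor). Lemma~\ref{l.absorb} cannot rescue this either: it absorbs a Hardy-weight integral $\int |u|^q/d_{E_B}^q$ over a larger ball into the same quantity, under a smallness hypothesis on $C_1$; it is not designed to absorb a free $\mu(B)$ term, and there is no smallness available here.

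The paper avoids the error term entirely by two devices you are missing. First, it replaces $d(\cdot,E)/R$ by a cutoff $\psi\in\Lip_0(B)$ with $\psi=1$ on $\tfrac12 B$ and sets $u=\min\{\psi,1-v\}$; then $u$ \emph{coincides} with $1-v$ on $\tfrac12B$, so $\Lip(u,\cdot)=\Lip(v,\cdot)$ there. Second, it applies Theorem~\ref{t.main} at the radius $\rho=\ell R$ with $\ell=(2\tau)^{-1}$, so that the right-hand ball $B(w,\tau\rho)=\tfrac12B$ lies entirely in the region where $u=1-v$; hence no $1/R$ contribution ever appears. The lower bound is then obtained by a dichotomy on $v_B$: if $v_B>C_1/4$ (your ``$v$ close to $1$ throughout $B$'' obstruction), one bypasses Theorem~\ref{t.main} altogether and uses the Sobolev inequality for $v\in\Lip_0(2B)$ directly; if $v_B\le C_1/4$, a short contradiction argument shows $u>1/2$ on a subset of $\ell B$ of measure at least $(C_1/2)\mu(B)$. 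Your second obstruction ($E$ thick in $B$ at small scales) never arises because the distance function is not used. As written, your proposal leaves the decisive absorption step unresolved, so it does not constitute a proof.
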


\begin{proof}
This proof is based on a similar idea
as the proof of Lemma~2 in~\cite{MR2854110}.
Let $0<\varepsilon<p-p_0$ be given by Theorem~\ref{t.main}, and write $q=p-\varepsilon$
and $\ell=(2\tau)^{-1}\le 1/2$, where $\tau$ is the dilatation constant from the
$(q,q)$-Poincar\'e inequality~\eqref{e.all_q}.
Fix $w$, $R$, and $v$ as in the statement of the proposition.
The doubling inequality~\eqref{e.doubling} implies that there  is a constant 
$C_1=C_{C_D,\tau}>0$ such that
$\mu(\ell B)\ge C_1 \mu(B)$.
If $v_B >C_1/4$, we obtain from condition (PI) and the Sobolev inequality~\cite[Theorem~5.51]{MR2867756} 
for $v\in\Lip_0(2B)$ that
\[
C_1/4\le \vint_{B} |v|\,d\mu \le C_D \vint_{2B} |v|\,d\mu
\leq C R \bigg( \vint_{2B} \Lip(v,\cdot)^q\,d\mu\bigg)^{1/q},
\]
and from this~\eqref{eq: goal} follows easily.

We may hence assume that $v_B\le C_1/4$.
Let $\psi\in \Lip_0(B)$ be a cut-off function, defined as 
\[
\psi(x) 
= \max\Bigl\{0,1-\tfrac 2 R \dist\bigl(x,\tfrac 1 2 B\bigr)\Bigr\}\,,
\]
and take 
\[u=\min\{\psi,1-v\}.\]
Since $1-v=0$ in $E\cap \overline{B}$ and
$\psi=0$ in $X\setminus B$,   
we have that $u\in \Lip_0(X\setminus E)$.
Observe that  $u$ coincides with $1-v$ on $(1/2)B$, and therefore
$\Lip(u,\cdot)|_{(1/2)B}=\Lip(v,\cdot)|_{(1/2)B}$.

Let $F=\{  x\in \ell B : u(x) > 1/2 \}$.
We claim that $\mu(F) \ge  (C_1/2) \mu(B)$.
To prove this claim we assume the contrary, namely, that $\mu(F) < (C_1/2)\mu(B)$.
Since $v\ge 0$ and $v=1-u\ge 1/2$ in $\ell B\setminus F$, we obtain
from the assumptions $\mu(\ell B)\ge C_1 \mu(B)$ and  $\mu(F) < (C_1/2)\mu(B)$ that
\begin{align*}
\int_B v\,d\mu\ge \int_{\ell B\setminus F} v\,d\mu
&\ge \tfrac 1 2\bigl(\mu(\ell B)-\mu(F)\bigr)\\
&> \tfrac 1 2 \bigl(C_1\mu(B)-(C_1/2)\mu(B)\bigr) = \tfrac 1 4 C_1\mu(B)\,.
\end{align*}
This contradicts the assumption $v_B\le C_1/4$, and thus indeed $\mu(F) \ge (C_1/2) \mu(B)$.

Theorem~\ref{t.main}, with $\rho=\ell R$, now implies that
\begin{align*}
(C_1/2) \mu(B) \le \mu(F)
\le  2^{q}\int_{\ell B}|u|^q\,d\mu
\le C R^q \int_{\tau\ell B} \Lip(u,\cdot)^q\,d\mu \le C R^q \int_{2 B} \Lip(v,\cdot)^q\,d\mu\,.
\end{align*}
This proves estimate~\eqref{eq: goal} and concludes the proof.
\end{proof}

In a Euclidean space $\R^n$, which supports the  $(1,p)$-Poincar\'e inequalities for all 
$1\le p<\infty$,
Proposition~\ref{p.almostfat} yields immediately the self-improvement
of uniform $p$-fatness.  Indeed, we can replace in our argument the 
Lipschitz function $v\in\Lip_0(2B)$ with a function $\tilde v\in C_0^\infty(2B)$ 
and the pointwise Lipschitz constant $\Lip(v,\cdot)$ with $|\nabla \tilde v|$,
whence the uniform $(p-\eps)$-fatness of $E$ follows 
from estimates~\eqref{e.comparison} and ~\eqref{eq: goal}.

More generally, in a complete metric space $X$ supporting a
$(1,p)$-Poincar\'e inequality, we can deduce the self-improvement
of uniform fatness from Proposition~\ref{p.almostfat} with the help of 
some deep facts concerning analysis on
metric spaces (see the proof below). Nevertheless, with an additional
argument using the interplay between uniform fatness and 
density conditions for suitable Hausdorff contents, it is possible to
obtain a version of the self-improvement in a non-complete setting as well
(Theorem~\ref{t.self-impr-non-compl_Intro}),
and hence in particular without the use of Cheeger's differentiation theory, but then the
$(p,p_0)$-Poincar\'e inequality, or at least the validity of improved Poincar\'e inequalities (PI)
for $p_0\le q\le p$, has to be explicitly assumed for some exponent $1<p_0<p$;
see Section~\ref{s.Hausd} for details.

\begin{proof}[Proof of Theorem~\ref{t.self-impr}]
Since the space $X$ is assumed to be complete, the validity of the improved Poincar\'e
inequalities (PI) follows from the $(1,p)$-Poincar\'e inequality, as
discussed in Section~\ref{s.poinc}. Hence we can apply 
Proposition~\ref{p.almostfat}. Moreover, by the deep result of Cheeger,
\cite[Theorem~6.1]{MR1708448} (see also \cite[Theorem A.7]{MR2867756}), the upper pointwise
Lipschitz constant $\Lip(v,\cdot)$ is a minimal
$(p-\eps)$-weak gradient of the Lipschitz function $v$, and so
we obtain from estimates~\eqref{e.comparison} and ~\eqref{eq: goal} (and Remark \ref{e.equiv_cap})
that the set $E$ is indeed uniformly $(p-\eps)$-fat.
\end{proof}

\section{Self-improvement of uniform fatness in non-complete spaces}\label{s.Hausd}

In this section we provide the additional argument that is needed
for the proof of the self-improvement result in the setting of non-complete metric spaces,
Theorem~\ref{t.self-impr-non-compl_Intro}.
In fact, we prove the following slightly stronger result
(by H\"older's inequality, the $(p,p_0)$-Poincar\'e inequality 
that was assumed in Theorem~\ref{t.self-impr-non-compl_Intro} implies
the improved Poincar\'e inequalities (PI) for $p_0\le q\le p$.) 
 
\begin{theorem}\label{t.self-impr-non-compl}
Let $1<p<\infty$ and suppose that $X$ supports the improved $(q,q)$-Poincar\'e inequalities
(PI) for $p_0\le q\le p$. 
Assume that $E\subset X$ is a  uniformly $p$-fat closed set
and that $E\cap \ol{B(w,r)}$ is compact for all $w\in E$ and all $r>0$.
Then there exists 
$0<\varepsilon<p-p_0$ such that 
$E$ is uniformly $(p-\varepsilon)$-fat; here both $\varepsilon$ and
the constant of uniform $(p-\varepsilon)$-fatness are quantitative.
\end{theorem}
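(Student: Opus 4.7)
The plan is to adapt the completion-free estimate of Proposition~\ref{p.almostfat} by routing the argument through a density condition for a codimensional Hausdorff content, a purely metric intermediary that is insensitive to the ambiguity of weak upper gradients which normally forces one to invoke Cheeger's theorem. The hypothesis that $E\cap\overline{B(w,r)}$ is compact is used precisely to validate the Vitali-type covering arguments on the relevant pieces of $E$, substituting for the completeness of $X$.

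For the first step, define the codimensional Hausdorff content
\[
\Ha^{\mu,s}_\infty(A)=\inf\sum_i\mu(B(x_i,r_i))r_i^{-s},
\]
the infimum extending over countable ball covers of $A$. I would derive from Proposition~\ref{p.almostfat} the density bound
\[
\Ha^{\mu,p-\varepsilon}_\infty(E\cap\overline{B})\ge c\,\mu(B)R^{-(p-\varepsilon)}
\]
for every ball $B=B(w,R)$ with $w\in E$ and $0<R<(1/8)\diam(X)$. Given any cover of the compact set $E\cap\overline{B}$, after a preliminary $5r$-selection I may assume the cover consists of disjoint balls $\{B(y_i,r_i)\}$ with $y_i\in E\cap\overline{B}$, $r_i\le R/2$, and whose $5$-dilates still cover $E\cap\overline{B}$. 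Form $v=\min\{1,\sum_i\psi_i\}$, where $\psi_i$ is the piecewise-linear cutoff that equals $1$ on $5B(y_i,r_i)$ and vanishes outside $10B(y_i,r_i)$. Then $v\in\Lip_0(2B)$ with $0\le v\le 1$ and $v=1$ on $E\cap\overline{B}$; by bounded overlap of $\{10B(y_i,r_i)\}$ inherited from disjointness and doubling, together with Jensen's inequality, one obtains
\[
\int_{2B}\Lip(v,\cdot)^{p-\varepsilon}\,d\mu\le C\sum_i\frac{\mu(B(y_i,r_i))}{r_i^{p-\varepsilon}}.
\]
Inserting this into~\eqref{eq: goal} and taking the infimum over covers gives the asserted content bound.

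For the second step, I would deduce uniform $(p-\varepsilon)$-fatness from this density condition via a covering-plus-Poincar\'e argument. Let $(u,g)$ be an admissible pair for $\cp_{p-\varepsilon}(E\cap\overline{B},2B)$ with $u\in\Lip_0(2B)$, $\chi_{E\cap\overline{B}}\le u\le 1$ (see Remark~\ref{e.equiv_cap}), and $g=g\chi_{2B}$ a $(p-\varepsilon)$-weak upper gradient of $u$. For each $y\in E\cap\overline{B}$ let $r_y\in(0,3R]$ be the smallest radius with $u_{B(y,r_y)}\le 3/4$; such an $r_y$ exists since $u(y)=1$ while $u$ vanishes outside $2B$. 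The minimality of $r_y$ together with doubling forces the mean oscillation of $u$ on $B(y,r_y)$ to be bounded below by a positive absolute constant, and hence the $(p-\varepsilon,p-\varepsilon)$-Poincar\'e inequality from (PI) yields
\[
\frac{\mu(B(y,r_y))}{r_y^{p-\varepsilon}}\le C\int_{\tau B(y,r_y)}g^{p-\varepsilon}\,d\mu.
\]
By a Vitali $5r$-selection on the compact set $E\cap\overline{B}$ I extract a disjoint subfamily $\{B(y_j,r_{y_j})\}$ whose $5$-dilates cover $E\cap\overline{B}$. Applying the Hausdorff content lower bound from the first step to this cover, and summing the Poincar\'e estimates using the bounded overlap of $\{\tau B(y_j,r_{y_j})\}$ inherited from disjointness and doubling, yields
\[
\mu(B)R^{-(p-\varepsilon)}\le C\sum_j\frac{\mu(B(y_j,r_{y_j}))}{r_{y_j}^{p-\varepsilon}}\le C\int_{2B}g^{p-\varepsilon}\,d\mu.
\]
Taking the infimum over $(u,g)$ and comparing with~\eqref{e.comparison} gives uniform $(p-\varepsilon)$-fatness of $E$.

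The main obstacle will be the rigorous implementation of the critical-radius selection and the covering arguments without appealing to completeness; in the classical complete setting these are subsumed by differentiation results and minimal-weak-upper-gradient identifications that are unavailable here. Compactness of $E\cap\overline{B(w,r)}$ is what rescues this: it guarantees that every open cover admits a finite subcover and that Vitali and $5r$-covering theorems apply directly to $E\cap\overline{B}$. Once those pieces are in place, the Hausdorff content serves as the scale-invariant, upper-gradient-independent bridge between the Lipschitz capacity estimate of Proposition~\ref{p.almostfat} and the genuine $(p-\varepsilon)$-capacity lower bound, thereby completing the self-improvement in the non-complete setting.
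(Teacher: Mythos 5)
Your overall architecture---Proposition~\ref{p.almostfat} $\Rightarrow$ codimension-$(p-\varepsilon)$ Hausdorff content density for $E\cap\overline{B}$ $\Rightarrow$ uniform fatness---is the same as the paper's (Lemmas~\ref{l.thick} and~\ref{l.fat}), and your first step is essentially Lemma~\ref{l.thick}. The genuine gap is in your second step, where you pass from the content density condition with exponent $q=p-\varepsilon$ back to uniform fatness with the \emph{same} exponent. That implication is false: a single point in $\R^2$ satisfies the codimension-$2$ density condition~\eqref{e.thick_again} (a cover by one ball $B(0,\rho)$ gives $\wtilde\Ha^2_{R/2}(\{0\})=\pi=\mu(B(0,R))R^{-2}$), yet it is not uniformly $2$-fat, since points have zero variational $2$-capacity in the plane. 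The specific step that fails is your claim that minimality of the critical radius $r_y$ (smallest $r$ with $u_{B(y,r)}\le 3/4$) forces the mean oscillation of $u$ over $B(y,r_y)$ to be bounded below by an absolute constant: by left-continuity of $r\mapsto u_{B(y,r)}$, the averages at $r_y$ and just below it are both essentially $3/4$, and nothing prevents $u$ from shedding its total oscillation of $1/2$ so slowly across scales that no single ball captures a definite amount of it. (This is exactly what the capacity test function of a near-polar set does.)

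The paper's Lemma~\ref{l.fat} resolves this by sacrificing a second, arbitrarily small, amount of exponent: codimension-$q$ content density yields uniform $t$-fatness only for $t>q$. In the telescoping estimate $1/2<\lvert u(x)-u_{2B}\rvert\le\sum_i\lvert u_{B_{i+1}}-u_{B_i}\rvert\le C\sum_i r_i\bigl(\vint_{\tau B_i}g^t\,d\mu\bigr)^{1/t}$ one needs summable weights $\epsilon_i$ with $\sum_i\epsilon_i\lesssim1$ to extract one good ball; choosing $\epsilon_i\sim(r_i/R)^{(t-q)/t}$ gives a convergent geometric series precisely because $t>q$ and produces a ball with $\mu(B_x)r_x^{-q}\le CR^{t-q}\int_{\tau B_x}g^t\,d\mu$, i.e.\ estimate~\eqref{e.good ball}, whose right-hand side then sums correctly against the codimension-$q$ content. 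Your conclusion survives with a smaller $\varepsilon'$ (take $p_0<q=p-\varepsilon<t=p-\varepsilon'<p$), but as written your second step asserts a false statement. Two smaller points in your first step: with radii up to $R/2$ the balls $10B(y_i,r_i)$ need not lie in $2B$, so your $v$ need not belong to $\Lip_0(2B)$; and $10$-dilates of disjoint balls of widely varying radii do not in general have bounded overlap. The paper's test function $\max_k\{0,1-r_k^{-1}\dist(\cdot,B_k)\}$, whose pointwise Lipschitz constant is bounded by $\max_k r_k^{-1}\chi_{\ol{2B_k}}$, avoids both issues. Your use of compactness (finite subcovers and the $5r$-covering lemma on $E\cap\overline{B}$) matches the paper's.
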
 

The proof of Theorem \ref{t.self-impr-non-compl} is based on
Proposition \ref{p.almostfat}, but
we also need some auxiliary results related to
Hausdorff contents.
Note that these auxiliary results are essentially established in~\cite{MR2854110},
but there the space $X$ is assumed to be complete.

The {\it Hausdorff content of codimension $q$}
of a set $K\subset X$ is defined by  
\[
\wtilde\Ha^q_\rho(K)=\inf\biggl\{\sum_{k} \mu(B(x_k,r_k))\,r_k^{-q} :
K\subset\bigcup_{k} B(x_k,r_k),\ x_k\in K,\ 0<r_k\leq \rho \biggr\}\,.
\]
Density conditions for these Hausdorff contents are known to be closely related
to uniform fatness. Indeed, from Proposition~\ref{p.almostfat} we obtain the following
result.

\begin{lemma}\label{l.thick}
Let $1<p<\infty$ 
and suppose that $X$ supports the improved $(q,q)$-Poincar\'e inequalities
(PI) for $p_0\le q\le p$. 
Assume that $E\subset X$ is a uniformly $p$-fat closed set.
Then there exist constants $C>0$ and $p_0<q<p$, quantitatively, such that
\begin{equation}\label{e.thick}
\wtilde\Ha^q_{R/2} \bigl(E\cap \ol{B(w,R)}\bigr) \ge C\mu\bigl(B(w,R)\bigr)R^{-q}
\end{equation}
whenever $w\in E$ and $0<R<(1/8)\diam(X)$ are such
that $E\cap \ol{B(w,R)}$ is compact.
\end{lemma}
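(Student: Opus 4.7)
The plan is to deduce Lemma~\ref{l.thick} directly from Proposition~\ref{p.almostfat} by turning any ball-cover of $K=E\cap\ol{B(w,R)}$ into an admissible test function for~\eqref{eq: goal}. Let $\varepsilon\in (0,p-p_0)$ be the exponent provided by Proposition~\ref{p.almostfat}, set $q=p-\varepsilon$, and write $B=B(w,R)$. Since $K$ is compact, it suffices to prove $\sum_{k=1}^N \mu(B(x_k,r_k))\,r_k^{-q}\ge C^{-1}\mu(B)R^{-q}$ for every finite family of balls with $x_k\in K$, $0<r_k\le R/2$, and $K\subset\bigcup_{k=1}^N B(x_k,r_k)$. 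The passage from arbitrary covers in the infimum defining $\wtilde{\Ha}^q_{R/2}(K)$ to finite subcovers uses exactly this compactness hypothesis, and this is precisely where the argument bypasses the completeness of $X$.

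Given such a finite cover, I would introduce the standard cut-offs $\psi_k(x)=\max\{0,\min\{1,\,2-r_k^{-1}d(x,x_k)\}\}$, which satisfy $\chi_{B(x_k,r_k)}\le \psi_k\le \chi_{B(x_k,2r_k)}$ and are $r_k^{-1}$-Lipschitz, and set $v=\max_{1\le k\le N}\psi_k$. Because $x_k\in\ol{B}$ and $r_k\le R/2$ force $B(x_k,2r_k)\subset 2B$, one has $v\in \Lip_0(2B)$; moreover $0\le v\le 1$ and $v=1$ on $K$ by the covering property. Hence $v$ is admissible in Proposition~\ref{p.almostfat}, yielding $\mu(B)R^{-q}\le C\int_{2B}\Lip(v,\cdot)^q\,d\mu$.

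The remaining step is to bound the right-hand integral by the Hausdorff-content sum. Using the standard pointwise observation that $\Lip(v,x)\le \max\{\Lip(\psi_k,x):\psi_k(x)=v(x)\}$ for a pointwise maximum of finitely many Lipschitz functions, one sees that $\Lip(v,x)$ vanishes off $\bigcup_k B(x_k,2r_k)$, and at every other $x$ only indices $k$ with $x\in B(x_k,2r_k)$ contribute, each with $\Lip(\psi_k,x)\le r_k^{-1}$. Consequently
\begin{equation*}
\int_{2B}\Lip(v,\cdot)^q\,d\mu \le \sum_{k=1}^N r_k^{-q}\,\mu(B(x_k,2r_k))\le C_D\sum_{k=1}^N \mu(B(x_k,r_k))\,r_k^{-q},
\end{equation*}
the last inequality by doubling. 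Combining with the previous display and taking the infimum over admissible covers then yields~\eqref{e.thick} with quantitative constants.

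The main obstacle I foresee is the pointwise Lipschitz bound for $v=\max_k\psi_k$: the naive estimate $\Lip(v,\cdot)\le \sum_k\Lip(\psi_k,\cdot)$ would lose a factor $N^{q-1}$ after raising to the $q$-th power and is thus unacceptable, so one really has to exploit the local nature of the maximum, namely that only indices $k$ for which $\psi_k(x)=v(x)$ can contribute to $\Lip(v,x)$. Once this localization is established, the rest is bookkeeping with the doubling property.
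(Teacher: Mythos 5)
Your proposal is correct and follows essentially the same route as the paper: reduce to a finite cover by compactness, build the test function $v$ as a maximum of cut-offs adapted to the covering balls, bound $\Lip(v,\cdot)^q$ by $\sum_k r_k^{-q}\chi_{\ol{2B_k}}$ using the local nature of the maximum, and conclude via Proposition~\ref{p.almostfat} and doubling. The only cosmetic point is that the contribution of $\Lip(\psi_k,\cdot)$ is supported in the \emph{closed} ball $\ol{B(x_k,2r_k)}$ (boundary points can still see the slope), which costs nothing after one more application of doubling.
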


\begin{proof}
Fix $w\in E$ and $0<R<(1/8)\diam(X)$, write $B=B(w,R)$, and assume that $E\cap \ol B$ is compact.
Let $\{B_k\}$, where $B_k=B(x_k,r_k)$ with $x_k\in E\cap \ol B$ and $0<r_k\le R/2$, 
be a cover of $E\cap \ol B$. Since $E\cap \ol B$ is compact, we may assume that this
cover is finite, i.e.\ $E\cap \ol B\subset \bigcup_{k=1}^N B_k$.
Also let $q=p-\eps$, where $\eps$ is as in Proposition~\ref{p.almostfat}.

Define
\[
v(x)=\max_{1\leq k\leq N}\big\{0,1- {r_k}^{-1}\dist(x,B_k)\big\}.
\]
Then $v$ is a Lipschitz function, 
$v = 1$ in $E\cap \ol B$, $v=0$ outside $2B$, and $0\le v\le 1$.
Moreover, the upper pointwise Lipschitz constant of $v$ satisfies
$\Lip(v,x)\le \max_{1\leq k\leq N} {r_k}^{-1}\chi_{\ol{2B_k}}(x)$ 
for all $x\in X$, 
and hence 
\begin{equation*}\label{eq: grad}
\Lip(v,x)^{q} \le \sum_{k=1}^N {r_k}^{-q}\chi_{\ol{2B_k}}(x)
\end{equation*}
for all $x\in 2B$.
Thus we obtain from Proposition~\ref{p.almostfat} (and the doubling condition) that
\begin{equation*}
\mu(B)R^{-q} \le C \int_{2B} \Lip(v,x)^{q}\,d\mu(x) \le 
C \sum_{k=1}^N \mu\bigl(\ol{2B_k}\bigr) {r_k}^{-q} \le 
C \sum_{k=1}^N \mu(B_k) {r_k}^{-q}\,.
\end{equation*}
Taking the infimum over all such covers of $E\cap \ol B$ yields the claim.
\end{proof}

On the other hand, from~\eqref{e.thick} we get back to $t$-uniform fatness, for any $t>q$.

\begin{lemma}\label{l.fat}
Let $1<q<\infty$
and suppose that $X$ supports a $(1,t)$-Poincar\'e inequality for all $t>q$.
Let $E\subset X$ be a closed set.
If there exists $C>0$ such that the density condition
\begin{equation}\label{e.thick_again}
\wtilde\Ha^q_{R/2} \bigl(E\cap \ol{B(w,R)}\bigr) \ge C\mu\bigl(B(w,R)\bigr)R^{-q}
\end{equation}
holds for all $w\in E$ and all $0<R<(1/8)\diam(X)$, then $E$ is uniformly $t$-fat for all $t>q$. 
\end{lemma}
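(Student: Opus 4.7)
The plan is to establish the capacity lower bound $\cp_{t}(E\cap \ol{B},2B)\ge c\mu(B)R^{-t}$ for each $B=B(w,R)$ with $w\in E$ and $0<R<(1/8)\diam(X)$ and each fixed $t>q$; combined with the upper bound in~\eqref{e.comparison}, this yields uniform $t$-fatness. I fix an admissible pair $u\in \Lip_{0}(2B)$ with $0\le u\le 1$ and $u=1$ on $K:=E\cap \ol{B}$, together with a $t$-weak upper gradient $g\in L^{t}(X)$ of $u$ (using Remark~\ref{e.equiv_cap}), and split the analysis by a dichotomy on the averages $u_{B(y,R)}$ for $y\in K$: either (A) $u_{B(y,R)}\le 3/4$ for every $y\in K$, or (B) some $y_{0}\in K$ satisfies $u_{B(y_{0},R)}>3/4$.

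Case (B) is easy: since $y_{0}\in \ol{B}$ we have $B(y_{0},R)\subset 2B$ and $\mu(B(y_{0},R))\gtrsim \mu(B)$ by doubling. The Sobolev--Poincar\'e inequality for $u\in \Lip_{0}(2B)$, available from the $(1,t)$-Poincar\'e inequality and doubling, combined with Jensen's inequality on $B(y_{0},R)$ then immediately gives $\int g^{t}\,d\mu\gtrsim \mu(B)R^{-t}$.

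For Case (A) I invoke a standard Frostman-type construction applied to the Hausdorff content bound~\eqref{e.thick_again} (as used in~\cite{MR2854110}; the construction requires only doubling, not completeness) to produce a positive measure $\nu$ supported on $K$ with $\nu(K)\gtrsim \mu(B)R^{-q}$ and $\nu(B(y,s))\lesssim \mu(B(y,s))s^{-q}$ at all relevant scales. Because $u$ is Lipschitz, every $y$ is a Lebesgue point, so Poincar\'e telescoping at dyadic radii yields
\begin{equation*}
1-u_{B(y,R)}=u(y)-u_{B(y,R)}\le C\sum_{k=0}^{\infty}2^{-k}R\biggl(\vint_{\lambda B(y,2^{-k}R)}g^{t}\,d\mu\biggr)^{1/t}\qquad\text{for every }y\in K.
\end{equation*}
Raising this to the $t$-th power via the geometric-series form of H\"older's inequality with an exponent $\delta\in(q,t)$ (available precisely because $t>q$) gives a pointwise bound of the form $(1-u_{B(y,R)})^{t}\le CR^{t}\sum_{k}2^{-k\delta}\vint_{\lambda B(y,2^{-k}R)}g^{t}\,d\mu$, which I then integrate against $d\nu(y)$. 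A Fubini swap together with the Frostman density on $\nu$ and doubling produces $\int \vint_{\lambda B(y,2^{-k}R)}g^{t}\,d\mu\,d\nu(y)\lesssim (2^{-k}R)^{-q}\int g^{t}\,d\mu$, and the geometric sum $\sum_{k}2^{-k(\delta-q)}$ converges. Combined with $1-u_{B(y,R)}\ge 1/4$ on $\spt\nu$ and $\nu(K)\gtrsim \mu(B)R^{-q}$, this yields $\int g^{t}\,d\mu\gtrsim \mu(B)R^{-t}$; the infimum over admissible pairs then gives the claimed capacity lower bound.

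The main obstacle I anticipate is organising the Frostman-plus-Fubini step cleanly: one must verify that the density bound on $\nu$, naturally valid at scales $s\le R/2$, can be extended to all dyadic scales $\lambda 2^{-k}R$ that arise in the chaining (at the price of a controlled, $\lambda$-dependent constant coming from the total mass of $\nu$ and doubling), and one must use doubling carefully to compare $\mu(B(y,\lambda 2^{-k}R))$ with $\mu(B(z,\lambda 2^{-k}R))$ when reversing the order of integration. The other ingredients---Poincar\'e chaining, the geometric-series H\"older trick, Sobolev--Poincar\'e for compactly supported Lipschitz functions, and the passage from $\cp_{t}$ to $t$-fatness via~\eqref{e.comparison}---are standard.
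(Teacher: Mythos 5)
Your overall strategy is sound and the reduction to the capacity lower bound $\cp_t(E\cap\overline{B},2B)\ge c\,\mu(B)R^{-t}$ via~\eqref{e.comparison} and Remark~\ref{e.equiv_cap} is exactly what is needed; your easy case and the paper's easy case ($u_{2B}\ge 1/2$ versus $u_{B(y_0,R)}>3/4$) are interchangeable. In the main case, however, you take a genuinely different route from the paper. The paper never constructs an auxiliary measure: from $u(x)-u_{2B}>1/2$ it runs a pointwise chaining/stopping-time argument (as in~\cite[Theorem~5.9]{MR1654771}) to select, for each $x\in E\cap\overline{B}$, a single ball $B_x=B(x,r_x)$ with $\mu(B_x)r_x^{-q}\le C R^{t-q}\int_{\tau B_x}g^t\,d\mu$; the $5r$-covering lemma extracts a disjoint subfamily whose dilates cover $E\cap\overline{B}$, and the density hypothesis~\eqref{e.thick_again} is applied \emph{directly to that cover}, so that summing the good-ball estimates over the disjoint family finishes the proof. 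The exponent gap $t>q$ enters through the stopping-time selection rather than through a geometric-series H\"older trick. Your route instead dualizes the content bound into a Frostman measure $\nu$ and integrates the telescoped Poincar\'e estimate against it; this is a legitimate and classical alternative, and your bookkeeping (the exponent $\delta\in(q,t)$, the Fubini swap, the extension of the density bound of $\nu$ to scales $\gtrsim R$ via the total mass and doubling) is correct.

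The one step you cannot take for granted is the existence of the Frostman measure itself, and your parenthetical justification is off: the reference~\cite{MR2854110} does not construct such a measure for this implication --- it uses precisely the covering argument described above. What you need is a Frostman-type lemma for the \emph{weighted codimension-$q$ content} $\wtilde\Ha^q_{R/2}$, producing $\nu$ supported on $K=E\cap\overline{B}$ with $\nu(K)\gtrsim\wtilde\Ha^q_{R/2}(K)$ and $\nu(B(y,s))\lesssim\mu(B(y,s))s^{-q}$. The standard constructions (nested dyadic mass distribution, or a weak-$\ast$ limit of discrete approximations) require $K$ to be compact, or at least $X$ to be complete so that closed bounded sets are compact. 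Lemma~\ref{l.fat} as stated assumes neither: $X$ may be non-complete and $E\cap\overline{B}$ need not be compact (compactness is hypothesized only in Theorem~\ref{t.self-impr-non-compl}, where the lemma is applied). So either you must add the compactness hypothesis to your version of the lemma --- harmless for the intended application, but then you should still supply or cite a proof of the weighted Frostman lemma in doubling metric spaces --- or you should replace the Frostman step by the paper's covering argument, which consumes the hypothesis~\eqref{e.thick_again} directly as an estimate on a specific cover and thereby avoids the auxiliary measure entirely. As written, the Frostman step is the gap.
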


\begin{proof}
Fix $t>q$. Let $w\in E$ and $0<R< (1/8)\diam(X)$, write $B=B(w,R)$, and let $u\in\Lip_0(2B)$ 
be such that $0\le u\le 1$ and $u=1$ in $E\cap \overline{B}$.
By the capacity comparison estimate~\eqref{e.comparison} and Remark~\ref{e.equiv_cap}, 
it suffices to show that there exists a constant $C>0$, independent of
$w$, $R$ and $u$, such that
\begin{equation}\label{eq: goal again}
\mu(B)R^{-t} \le C \int_{2B} g^{t}\,d\mu
\end{equation}
for all $t$-weak upper gradients $g$ of $u$ such that $g=g\chi_{2B}\in L^t(X)$.

If $u_{2B}\ge 1/2$, then it follows from the 
Sobolev inequality~\cite[Theorem~5.51]{MR2867756} 
that
\[
1/2 \le \vint_{2B} u\,d\mu
\leq C_{X,p}R \bigg(\,\vint_{2B} g^t\,d\mu\bigg)^{1/t},
\]
and from this~\eqref{eq: goal again} follows easily.

On the other hand, if $u_{2B} < 1/2$, we can use similar reasoning as 
in~\cite[p.~729]{MR2854110} 
(which is based on the proof of~\cite[Theorem~5.9]{MR1654771}),
but let us recall the main steps for convenience.
Since $t>q$ and $1/2<u(x)-u_{2B}=|u(x)-u_{2B}|$ for each $x\in E\cap \overline{B}$,
we can apply a well-known chaining argument 
(using also the continuity of $u$ and the $(1,t)$-Poincar\'e inequality)
to find for
each $x\in E\cap \overline{B}$ a ball $B_x=B(x,r_x)$ with $0<r_x\le 3R$
such that 
\begin{equation}\label{e.good ball}
 \mu(B_x)r_x^{-q}\le C_{X,t,q} R^{t-q}\int_{\tau B_x} g^t\, d\mu\,.
\end{equation}

The $5r$-covering lemma then yields us a countable collection of
points $x_1,x_2,\ldots\in E\cap \ol B$ such that the 
corresponding balls $B_k=\tau B_{x_k}$ are pairwise disjoint, but
the balls $5 B_k$ cover $E\cap \ol{B}$. Using the assumption~\eqref{e.thick_again} for this particular cover 
and the doubling property of $\mu$, we find that
\begin{equation}
\mu(B)R^{-q}\le C \sum_k \mu(B_{x_k})r_{x_k}^{-q}\,,
\end{equation}
whence estimate~\eqref{e.good ball} and the pairwise disjointness of the balls
$B_k$ yield the claim~\eqref{eq: goal again}.  
(In particular, here we may assume that the radii of the balls $5 B_{k}$ are all
less than $R/2$, since otherwise the claim readily follows from
the doubling property of $\mu$ and inequality~\eqref{e.good ball} 
applied to a ball $B_{x_k}$ with $5\tau r_{x_k} > R/2$).
\end{proof}
 
\begin{proof}[The proof of Theorem~\ref{t.self-impr-non-compl}]
Since we assumed that $E$ is uniformly $p$-fat and $E\cap \ol{B(w,R)}$ is compact for all
$w\in E$ and all $R>0$, we have by Lemma~\ref{l.thick} that
\[\wtilde\Ha^q_{R/2} \bigl(E\cap \ol{B(w,R)}\bigr) \ge C\mu\bigl(B(w,R)\bigr)R^{-q}\]
for all $w\in E$ and all $0<R<(1/8)\diam(X)$, where $p_0<q<p$. 
But now we can fix $q<t < p$, and
Lemma~\ref{l.fat} yields that $E$ is uniformly $t$-fat. Notice, in particular, that the
$(1,t)$-Poincar\'e inequality that is needed in the proof of Lemma~\ref{l.fat} is valid
by the assumed improved Poincar\'e inequalities (PI) since $p_0<t<p$. 
\end{proof}

\section{Improved local Hardy inequalities}\label{s.proof}

This section is devoted to the proof of the improved
local Hardy inequality~\eqref{e.impr}
that  is reformulated as  Proposition~\ref{p.improved}.
The proof of this proposition is divided in the following three parts.
In \S\ref{s.truncation} we prepare for the localization 
of Hardy inequalities by
truncating the set $E$ and
proving a local absorption lemma.
In \S\ref{s.wannebo} we then obtain
localized Hardy inequalities with exponent $p$, and in \S\ref{s.improvement} we
finally establish their self-improvement.

\subsection{Truncation}\label{s.truncation}

We begin with some technical tools that will be needed in the
proofs of the local Hardy inequalities.
The following truncation procedure provides us with the closed set $E_B\subset \overline{B}$
that was required in the proof of Theorem~\ref{t.main}.
A similar procedure was used in~\cite[p. 180]{MR946438}
when proving the self-improvement of uniform $(\alpha,p)$-fatness conditions
in $\R^n$, and later also in~\cite{MR1386213}, for weighted $\R^n$, 
and in~\cite{MR1869615}, for general metric spaces. 

We write $\N=\{1,2,3,\ldots\}$ and $\N_0=\N\cup\{0\}$.

\begin{lemma}\label{l.truncation}
 Assume that $E\subset X$ is a closed set and that $B=B(w,r)$ for $w\in E$ and $r>0$.
Let $E_{B}^0=E\cap \overline{\frac 1 2 B}$, define inductively, for every $j\in\N$, that
\[
E_{B}^{j}=\bigcup_{x\in E_{B}^{j-1}} E\cap \overline{B(x,2^{-j-1}r)}\,,\quad
\text{ and set } \quad
E_{B}=\overline{\bigcup_{j\in \N_0} E_{B}^{j}}.
\]
Then the following statements hold:
\begin{itemize}
\item [(a)] $w\in E_{B}$
\item[(b)] $E_{B}\subset E$
\item[(c)] $E_{B}\subset \overline{B}$
\item[(d)] $E^{j-1}_{B}\subset E^{j}_{B}\subset E_{B}$  for every $j\in\N$.
\end{itemize}
\end{lemma}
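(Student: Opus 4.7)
The plan is to verify the four statements essentially directly from the recursive construction; the argument is elementary, with one small geometric-series computation being the only point that needs real care.

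First, for \textbf{(a)}, since $w\in E$ and $w$ is the center of $B$, we have $w\in\overline{\tfrac12 B}$, and therefore $w\in E\cap\overline{\tfrac12 B}=E_B^0\subset\bigcup_{j\in\N_0}E_B^j\subset E_B$. For \textbf{(b)}, every piece $E\cap\overline{B(x,2^{-j-1}r)}$ appearing in the union that defines $E_B^j$ is a subset of $E$, so $\bigcup_{j\in\N_0}E_B^j\subset E$; since $E$ is closed, taking closure preserves containment and yields $E_B\subset E$.

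For \textbf{(c)}, I would show by induction on $j\in\N_0$ that $E_B^j\subset\overline{B(w,\rho_j)}$ with $\rho_j=r\sum_{k=0}^{j}2^{-k-1}=r(1-2^{-j-1})<r$. The base case $j=0$ is immediate from the definition of $E_B^0$. For the inductive step, any $y\in E_B^j$ lies in $\overline{B(x,2^{-j-1}r)}$ for some $x\in E_B^{j-1}\subset\overline{B(w,\rho_{j-1})}$, so the triangle inequality gives $d(y,w)\le 2^{-j-1}r+\rho_{j-1}=\rho_j<r$, hence $y\in\overline{B}$. Consequently $\bigcup_{j\in\N_0}E_B^j\subset\overline{B}$, and since $\overline{B}$ is closed, taking closure yields $E_B\subset\overline{B}$.

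Finally, for \textbf{(d)}, the inclusion $E_B^j\subset E_B$ is tautological from the definition of $E_B$ as the closure of $\bigcup_{j\in\N_0}E_B^j$. For $E_B^{j-1}\subset E_B^j$, fix $x\in E_B^{j-1}$; by (b) we have $x\in E$, and trivially $x\in\overline{B(x,2^{-j-1}r)}$, so $x\in E\cap\overline{B(x,2^{-j-1}r)}$, one of the sets in the union defining $E_B^j$, and thus $x\in E_B^j$. There is no genuine obstacle in this lemma; the only delicate point is tracking the dyadic geometric sum in (c) to ensure that the radii stay bounded by $r$, so that the entire construction remains inside $\overline{B}$.
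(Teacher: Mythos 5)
Your proof is correct and follows essentially the same route as the paper: parts (a), (b), (d) are identical, and for (c) your induction on the radii $\rho_j=r(1-2^{-j-1})$ is just a repackaging of the paper's unrolled chain $x_j,\dots,x_0$ and the same geometric series $\sum_{k=1}^{j}2^{-k-1}r+2^{-1}r<r$. One cosmetic caveat: since the paper's convention is that $\overline{B(w,\rho_j)}$ denotes the closure of the \emph{open} ball (which in a general metric space may be strictly smaller than the closed ball $\{y:d(y,w)\le\rho_j\}$), your induction hypothesis should be phrased as the distance bound $d(y,w)\le\rho_j$ rather than membership in $\overline{B(w,\rho_j)}$ -- this is exactly what your argument actually proves and uses, so nothing of substance changes.
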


\begin{proof}
Part (a) is is true since $w\in E_{B}^0$. Part (b) follows from the facts
that $E$ is closed and $\cup_j E_{B}^j\subset E$ by definition.
To verify (c), we fix $x\in E_{B}^j$.
If $j=0$, then $x\in \overline{B}$. If $j>0$, then
by induction we find a sequence $x_j, \ldots,x_0$ such $x_j=x$ and,
for
each $k=0,\ldots,j$,
$x_k\in E^k_{B}$ and
$x_k\in E\cap \overline{B(x_{k-1},2^{-k-1}r)}$ if $k>0$. It follows that
\[
d(x,w)\le \sum_{k=1}^j  d(x_k,x_{k-1}) + d(x_0,w) \le \sum_{k=1}^j 2^{-k-1}r + 2^{-1}r < r\,.
\]
Hence, $x\in B(w,r)\subset \overline{B}$.
We have shown that $E_{B}^j\subset \overline{B}$ whenever $j\in\N_0$,
whence it follows that also $E_{B}\subset \overline{B}$.
To prove (d) we fix $j\in\N$ and $x\in E^{j-1}_{B}$. By definition
we have $x\in E$ and, hence,
$x\in E\cap B(x,2^{-j-1}r)\subset E^j_{B}$.
\end{proof}

Next we show that Lemma \ref{l.truncation}, in fact, truncates the set $E$ to $B$ in such a way 
that there are always certain balls  $\widehat{B}$
whose intersection with the truncated set $E_B$
contain big pieces of the original set $E$ (by these balls
we later employ the uniform fatness of $E$).

\begin{lemma}\label{l.pallot}
Let $E$, $B$, and $E_{B}$ be as in Lemma~\ref{l.truncation}.
Suppose that  $m\in\N_0$ and $x\in X$ is such that
$d_{E_{B}}(x) <2^{-m+1}r$.
Then there exists
a ball $\widehat{B}=B(y_{x,m},2^{-m-1}r)$ such that $y_{x,m}\in E$,
\begin{equation}\label{e.bubble} 
\overline{2^{-1}\widehat{B}}\cap E=\overline{2^{-1}\widehat{B}}\cap E_{B}\,,
\end{equation}
and $\sigma \widehat{B}\subset B(x,\sigma 2^{-m+2}r)$ for every $\sigma\ge 1$.
\end{lemma}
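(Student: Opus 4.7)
The plan is to construct $y_{x,m}$ as a point in the stratum $E_B^m$ lying close to $x$, by tracing back along the chain structure underlying the inductive definition of the $E_B^j$. Since $d_{E_B}(x) < 2^{-m+1}r$, I would pick $z \in E_B$ with $d(x,z) < 2^{-m+1}r$ and set $\delta := 2^{-m+1}r - d(x,z) > 0$. Because $E_B$ is the closure of $\bigcup_{j \in \N_0} E_B^j$, there exists $z' \in E_B^j$ for some index $j \in \N_0$ with $d(z,z') < \delta$, hence $d(x,z') < 2^{-m+1}r$. The nesting $E_B^j \subset E_B^{j+1}$ from Lemma~\ref{l.truncation}(d) allows me to replace $j$ by a larger index if needed and assume $j \geq m$.

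Unraveling the definition $E_B^i = \bigcup_{y \in E_B^{i-1}} E \cap \overline{B(y,2^{-i-1}r)}$ inductively from $i=j$ down to $i=1$ produces a chain $z' = x_j, x_{j-1},\dots, x_0$ with $x_i \in E_B^i$ for every $i$ and $d(x_i,x_{i-1}) \leq 2^{-i-1}r$ for $i \geq 1$. I set $y_{x,m} := x_m \in E_B^m \subset E$ (the last inclusion by Lemma~\ref{l.truncation}(b)) and $\widehat{B} := B(y_{x,m}, 2^{-m-1}r)$. The telescoping estimate
\[
d(z', y_{x,m}) \leq \sum_{i=m+1}^{j} 2^{-i-1}r < 2^{-m-1}r
\]
combined with $d(x,z') < 2^{-m+1}r = 4\cdot 2^{-m-1}r$ yields $d(x,y_{x,m}) < 5\cdot 2^{-m-1}r$. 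The dilation inclusion $\sigma\widehat{B} \subset B(x, \sigma 2^{-m+2}r)$ for $\sigma \geq 1$ is then an immediate triangle-inequality consequence of $\sigma + 5 \leq 8\sigma$.

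For the remaining identity $\overline{2^{-1}\widehat{B}} \cap E = \overline{2^{-1}\widehat{B}} \cap E_B$, the inclusion $\supset$ is Lemma~\ref{l.truncation}(b). For $\subset$, any $y \in E \cap \overline{B(y_{x,m}, 2^{-m-2}r)}$ lies in $E \cap \overline{B(y_{x,m}, 2^{-(m+1)-1}r)}$, which by the very definition of $E_B^{m+1}$ applied to the point $y_{x,m} \in E_B^m$ is contained in $E_B^{m+1} \subset E_B$. The only real delicacy is the radius bookkeeping: once one recognizes that the radius $2^{-m-2}r$ appearing inside $2^{-1}\widehat{B}$ matches exactly the step size at level $m+1$ of the construction, and that the total displacement $\sum_{i > m} 2^{-i-1}r$ from $z'$ down to level $m$ stays below $2^{-m-1}r$, everything else reduces to arithmetic with geometric sums.
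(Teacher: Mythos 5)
Your proof is correct and follows essentially the same chaining argument as the paper: locate a point of some stratum $E_B^j$ near $x$, trace the chain back to its level-$m$ ancestor $y_{x,m}\in E_B^m$, and read off \eqref{e.bubble} from the definition of $E_B^{m+1}$. The only (harmless) cosmetic difference is that you absorb the paper's separate case $j\le m$ into the main argument via the nesting $E_B^{j}\subset E_B^{j+1}$, which makes the write-up slightly cleaner.
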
 

\begin{proof}
In this proof we will apply Lemma~\ref{l.truncation} several times without further notice.
Since
$d_{E_{B}}(x)<2^{-m+1}r$ there exists 
$y\in \cup_{j\in\N_0}E_{B}^j\subset E$ such that
$d(y,x)<2^{-m+1}r$. Let us fix $j\in\N_0$ such that
$y\in E_{B}^j$. There are two cases to be treated.

First, let us consider the case when $j>m\ge 0$. By induction, there
are points $y_k\in E_{B}^k$ with $k=m,\ldots,j$ such that 
$y_j=y$ and
$y_k\in E\cap \overline{B(y_{k-1},2^{-k-1}r)}$
for every $k=m+1,\ldots,j$. It follows that
\[
d(y_m,y) = d(y_j,y_m) \le \sum_{k=m+1}^j d(y_k,y_{k-1})
\le \sum_{k=m+1}^j 2^{-k-1}r < 2^{-m-1}r\,.
\]
Take $y_{x,m}=y_m\in E_{B}^m\subset E$ and $\widehat{B}=B(y_m,2^{-m-1}r)$. 
If $\sigma\ge 1$ and  $z\in \sigma \widehat{B}$, then
\begin{align*}
d(z,x) &\le d(z,y_m) + d(y_m,y) + d(y,x)\\
&\le \sigma 2^{-m-1}r + 2^{-m-1} r+ 2^{-m+1}r<\sigma 2^{-m+2}r\,,
\end{align*}
and thus $\sigma \widehat{B}\subset B(x, \sigma 2^{-m+2}r)$.
Moreover, since $y_m\in E_{B}^m$, we have
\begin{align*}
\overline{2^{-1}\widehat{B}}\cap E & =E\cap \overline{B(y_m, 2^{-m-2}r)}
\subset \bigcup_{z\in E_{B}^m} E\cap \overline{B(z, 2^{-m-2}r)} = E_{B}^{m+1}\subset E_{B}\,.
\end{align*}
On the other hand $E_{B}\subset E$, and thus 
$\overline{2^{-1}\widehat{B}}\cap E=\overline{2^{-1}\widehat{B}}\cap E_{B}$.

Let us then consider the case $m\ge j\ge 0$. We take $y_{x,m}=y\in E$ and $\widehat{B}=B(y,2^{-m-1}r)$.
Then, for every $\sigma\ge1$ and each 
$z\in \sigma \widehat{B}$,
\[
d(z,x)\le d(z,y) + d(y,x) < \sigma 2^{-m-1}r + 2^{-m+1}r < \sigma 2^{-m+2}r\,,
\]
and so $\sigma \widehat{B}\subset B(x, \sigma 2^{-m+2}r)$. 
Since $y\in E_{B}^j\subset E_{B}^m\subset E_{B}$ 
we can repeat the argument above, with
$y_m$ replaced by $y$,
and it follows as above that $\overline{2^{-1}\widehat{B}}\cap E=\overline{2^{-1}\widehat{B}}\cap E_{B}$.
\end{proof}

One of the reasons for truncating the set $E$,  {\em in the first place}, is to obtain
the absorption Lemma~\ref{l.absorb}.
This lemma is needed twice during the rest of the paper,
with slightly different contexts, and hence there are two
different assumptions concerning the validity of Poincar\'e inequalities.    
The dependencies 
of the constants below are rather delicate, and it
is important to track them carefully; to this end, recall
our notational convention from~\S\ref{s.constants}.

\begin{lemma}\label{l.absorb}
Suppose that either
\begin{itemize}
 \item[{\em(i)}] $1\le q=p<\infty$ and $X$ supports a $(1,p)$-Poincar\'e inequality; or
 \item[{\em(ii)}] $1<p_0 < p<\infty$ and $X$ supports the improved Poincar\'e inequalities 
                  (PI) for exponents $p_0 \le q \le p$.
\end{itemize}
In addition, let $E$, $B$, and $E_{B}$ be as in Lemma~\ref{l.truncation},
let $\sigma\ge 1$ and $\varsigma\ge 2$, and write $B^*=\varsigma B$.
Assume that $u\in\Lip(X)$ is such that $u = 0$ on $E_{B}$,
and that $g$ is a $q$-weak upper gradient of $u$ such that inequality
\[
\int_{B^*\setminus E_{B}} \frac{\lvert u(x)\rvert^{q}}{d_{E_{B}}(x)^{q}}\,d\mu(x)
\le C_1\int_{\sigma B^* \setminus E_{B}} \frac{\lvert u(x)\rvert^{q}}{d_{E_{B}}(x)^{q}}\,d\mu(x)
+ C_2\int_{\sigma B^*} g(x)^{q}\,d\mu(x)
\]
holds with some constants $C_1,C_2>0$.
Then
\begin{align*}
C_3\int_{\sigma B^*\setminus E_{B}} \frac{\lvert u(x)\rvert^{q}}{d_{E_{B}}(x)^{q}}\,d\mu(x)
\le C_4\int_{\tau \sigma B^*} g(x)^{q}\,d\mu(x)\,,
\end{align*}
where $C_3=1-C_1(1+C_{C_D,\sigma,\varsigma,p})$ and $C_4=(1+C_2)C_{X,\sigma,\varsigma,q}$.
\end{lemma}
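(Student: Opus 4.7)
The proof is a classical absorption argument. Since $E_B\subset\ol B\subset B^*$ (as $\varsigma\ge 2$), the target integral splits as $X=\I+\II$ with
\[
\I:=\int_{B^*\setminus E_B}\frac{\lvert u\rvert^{q}}{d_{E_B}^{q}}\,d\mu\,,\qquad \II:=\int_{\sigma B^*\setminus B^*}\frac{\lvert u\rvert^{q}}{d_{E_B}^{q}}\,d\mu\,.
\]
For $\I$ the hypothesis gives directly $\I\le C_1X+C_2\int_{\sigma B^*}g^{q}\,d\mu$, so the heart of the proof is to control $\II$ by $\I$ plus the desired good term. Geometrically, $x\in\sigma B^*\setminus B^*$ together with $E_B\subset\ol B$ forces $d_{E_B}(x)\ge(\varsigma-1)r$, so
\[
\II\le\frac{1}{((\varsigma-1)r)^{q}}\int_{\sigma B^*}\lvert u\rvert^{q}\,d\mu\,,
\]
and everything reduces to the Poincar\'e-type estimate
\[
\int_{\sigma B^*}\lvert u\rvert^{q}\,d\mu\le C\,r^{q}\Bigl(\I+\int_{\tau\sigma B^*}g^{q}\,d\mu\Bigr)
\]
with a constant $C$ of the form $C_{X,\sigma,\varsigma,q}$.

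I plan to obtain the displayed Poincar\'e-type estimate by a cutoff-and-capacitary-Poincar\'e argument. Fix a Lipschitz cutoff $\phi$ with $\phi\equiv 0$ on $B(w,r/2)$, $\phi\equiv 1$ outside $B=B(w,r)$, and $\Lip(\phi,\cdot)\le 2/r$, and set $\tilde u:=u\phi$. Then $\tilde u$ vanishes on $B(w,r/2)$, and the comparison~\eqref{e.comparison} together with doubling gives a lower bound $\cp_q(B(w,r/2),\sigma B^*)\ge c\mu(B)r^{-q}(\sigma\varsigma)^{-q}$ of purely geometric type. The capacitary Poincar\'e inequality~\eqref{e.capacity_P} applied to $\tilde u$ on the ball $\sigma B^*$ (valid in both cases (i) and (ii) since the $(q,q)$-Poincar\'e inequality is available in each) then yields
\[
\int_{\sigma B^*}\lvert\tilde u\rvert^{q}\,d\mu\le C\,r^{q}\int_{\tau\sigma B^*}\tilde g^{q}\,d\mu\,,
\]
where $\tilde g:=g+(2/r)\lvert u\rvert\chi_B$ is a $q$-weak upper gradient of $\tilde u$ by the Leibniz rule. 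Since $\tilde u=u$ off $B$, this controls $\int_{\sigma B^*\setminus B}\lvert u\rvert^{q}$, while the complementary piece is handled by the observation that $w\in E_B\cap B$ forces $d_{E_B}(x)<r$ for all $x\in B$, so that $\int_B\lvert u\rvert^{q}\,d\mu\le r^{q}\I$. The very same observation absorbs the product-rule error term $(2/r)^{q}\int_B\lvert u\rvert^{q}$ coming from $\int\tilde g^{q}$ into $\I$, delivering the Poincar\'e-type estimate.

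Combining the pieces, $\II\le K\bigl(\I+\int_{\tau\sigma B^*}g^{q}\,d\mu\bigr)$ for a constant $K$ of quantitative geometric type (the $C_{C_D,\sigma,\varsigma,p}$ of the statement), and then
\[
X=\I+\II\le(1+K)\I+K\int_{\tau\sigma B^*}g^{q}\,d\mu\le(1+K)\Bigl(C_1X+C_2\int_{\sigma B^*}g^{q}\,d\mu\Bigr)+K\int_{\tau\sigma B^*}g^{q}\,d\mu\,,
\]
which rearranges to the claimed inequality with $C_3=1-C_1(1+K)$ and $C_4$ linear in $(1+C_2)$. The main obstacle is the delicate scale calibration inside the Poincar\'e-type estimate: the cutoff $\phi$ must be localised at exactly the scale $r$ of $B$, so that its product-rule error is absorbed into $\I$ (via $d_{E_B}<r$ on $B$) rather than into the full $X$. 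Any mismatch would produce a geometric constant multiplying $X$ on the right-hand side of the recursion, which could not be absorbed and would destroy the desired structure $C_3=1-C_1(1+C_{\ldots})$.
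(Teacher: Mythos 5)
Your outer skeleton coincides with the paper's: the same splitting of $\int_{\sigma B^*\setminus E_B}$ into the piece over $B^*$ (handled by the hypothesis) and the annular piece (where $d_{E_B}\ge(\varsigma-1)r$ because $E_B\subset\ol B$), followed by the same algebraic rearrangement giving $C_3=1-C_1(1+K)$. Where you diverge is in proving the intermediate estimate $r^{-q}\int_{\sigma B^*}|u|^q\lesssim \I+\int_{\tau\sigma B^*}g^q$. The paper does this without any capacity: it writes $|u|\le|u-u_{\sigma B^*}|+|u_{\sigma B^*}-u_{B^*}|+|u_{B^*}|$, controls the first two terms by the $(q,q)$-Poincar\'e inequality~\eqref{e.all_q}, and bounds $\mu(\sigma B^*)|u_{B^*}|^q\lesssim\int_{B^*\setminus E_B}|u|^q\le(\varsigma r)^q\,\I$ using only $w\in E_B$ (so $d_{E_B}<\varsigma r$ on $B^*$) and doubling; the hypothesis is then applied a second time to this copy of $\I$. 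Your cutoff-plus-capacitary-Poincar\'e route reaches the same estimate, and your absorption of the product-rule error $(2/r)^q\int_B|u|^q\le 2^q\I$ via $d_{E_B}<r$ on $B$ is exactly the right calibration. One consequence of your route worth noting: your constant $K$ inherits the Poincar\'e and capacitary constants, so you prove the lemma with $C_3=1-C_1(1+C_{X,\sigma,\varsigma,q})$ rather than the stated $1-C_1(1+C_{C_D,\sigma,\varsigma,p})$; this is harmless downstream, since in both applications the small parameter is chosen depending on $C_X$ anyway, but it is a genuine weakening of the stated dependence.

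The one step you cannot leave as written is the capacity lower bound. You assert that \eqref{e.comparison} "together with doubling" gives $\cp_q(\ol{B(w,r/2)},\sigma B^*)\ge c\,\mu(B)r^{-q}$. But \eqref{e.comparison} bounds $\cp_q(K,2B')$ from below only when the outer set is the exact double $2B'=B(w,r)$, and capacity is \emph{decreasing} in the outer set, so enlarging $B(w,r)$ to $\sigma B^*$ pushes the capacity the wrong way: monotonicity gives $\cp_q(\ol{B(w,r/2)},\sigma B^*)\le\cp_q(\ol{B(w,r/2)},B(w,r))$, which is useless here. The bound you need is true because the ratio of radii $\sigma\varsigma$ is a fixed geometric constant, but it requires its own argument (e.g., apply the $(q,q)$-Poincar\'e inequality on $2\sigma B^*$ to an admissible $v\in\Lip_0(\sigma B^*)$ with $v\ge1$ on $\ol{B(w,r/2)}$, noting that $|v-v_{2\sigma B^*}|\ge c>0$ on a set of measure $\gtrsim\mu(B)$). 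Supply that argument, or switch to the paper's mean-value decomposition, which avoids capacity entirely and is the shorter path.
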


\begin{proof}
Since $E_{B}\subset \overline{B}$ and $\varsigma\ge 2$, we obtain the estimate
\begin{align*}
\int_{(\sigma B^*\setminus E_{B})\setminus B^*} & \frac{\lvert u(x)\rvert^{q}}{d_{E_{B}}(x)^{q}}\,d\mu(x)
\le r^{-q}\int_{\sigma B^*}\lvert u(x)\rvert^q\,d\mu(x)\\
& \le 3^q r^{-q} \bigg( \int_{\sigma B^*}\lvert u(x)-u_{\sigma B^*}\rvert^q\,d\mu(x) + \mu(\sigma B^*)\lvert u_{\sigma B^*}-u_{B^*}\rvert^q + \mu(\sigma B^*) \lvert u_{B^*}\rvert^q \bigg)\,.
\end{align*}
By the doubling inequality~\eqref{e.doubling} and the $(q,q)$-Poincar\'e inequality~\eqref{e.all_q}
(recall that in case (i), i.e.\ for $q=p$, this is a consequence of the $(1,p)$-Poincar\'e inequality, 
cf.\ \cite[Corollary~4.24]{MR2867756})
we obtain
\begin{align*}
3^q r^{-q}\bigg(\int_{\sigma B^*} &\lvert u(x)-u_{\sigma B^*}\rvert^q\,d\mu(x) + 
    \mu(\sigma B^*)\lvert u_{\sigma B^*}-u_{B^*}\rvert^q\bigg) \\
&\le C_{\sigma,q,C_D}r^{-q}\int_{\sigma B^*}\lvert u(x)-u_{\sigma B^*}\rvert^q\,d\mu(x)
    \le C_{X,\sigma,\varsigma,q} \int_{\tau \sigma B^*} g(x)^q\,d\mu(x)\,.
\end{align*}
On the other hand, by the assumption,
\begin{align*}
3^qr^{-q}&\mu(\sigma B^*) \lvert u_{B^*}\rvert^q 
\le 3^q C_{\sigma,C_D}r^{-q}\int_{B^*\setminus E_{B}}\lvert u(x)\rvert^q\,d\mu(x)\\
&\le 3^q \varsigma^{q} C_{\sigma,C_D}\int_{B^*\setminus E_{B}}\frac{\lvert u(x)\rvert^q}{d_{E_{B}}(x)^q}\,d\mu(x)\\
&\le  3^p \varsigma^{p} C_{\sigma,C_D}C_1  \int_{\sigma B^*\setminus E_{B}} \frac{\lvert u(x)\rvert^{q}}{d_{E_{B}}(x)^{q}}\,d\mu(x)
+ 3^q \varsigma^{q} C_{\sigma,C_D}C_2 \int_{\sigma B^*} g(x)^{q}\,d\mu(x)\,.
\end{align*}
Combining the estimates above, we find that
\begin{align*}
\int_{\sigma B^*\setminus E_{B}} &\frac{\lvert u(x)\rvert^q}{d_{E_{B}}(x)^q}\,d\mu(x) 
= \int_{B^*\setminus E_{B}} \frac{\lvert u(x)\rvert^q}{d_{E_{B}}(x)^q}\,d\mu(x) +
  \int_{(\sigma B^*\setminus E_{B})\setminus B^*}\frac{\lvert u(x)\rvert^q}{d_{E_{B}}(x)^q}\,d\mu(x) \\
&\le C_1(1+C_{C_D,\sigma,\varsigma,p})\int_{\sigma B^*\setminus E_{B}} \frac{\lvert u(x)\rvert^{q}}{d_{E_{B}}(x)^{q}}\,d\mu(x) 
    + C_4\int_{\tau \sigma B^*} g(x)^{q}\,d\mu(x)\,,
\end{align*}
where $C_4=(1+C_2)C_{X,\sigma,\varsigma,q}$. This concludes the proof.
\end{proof}

\subsection{Local Hardy}\label{s.wannebo}

In this section we prove Proposition \ref{p.Wannebo} that gives a local $p$-Hardy inequality with respect to
the truncated set $E_{B}$ (Theorem~\ref{t.Wannebo_intro} is also proved at the end of this section).  
This is done by adapting the proof of~\cite[Theorem~3]{MR2854110}
to the present setting. The proof in~\cite{MR2854110} is,
in turn, based on the ideas of Wannebo~\cite{MR1010807}; see also~\cite{MR2114252}.

Throughout this section, we will assume that $X$ supports a $(1,p)$-Poincar\'e inequality,
whence $X$ supports also a $(p,p)$-Poincar\'e inequality. Both of these
inequalities are assumed to be valid with constants $C_P>0$ and $\tau\ge 1$.

\begin{proposition}\label{p.Wannebo}
Let $1<p<\infty$ and suppose 
that $X$ supports a $(1,p)$-Poincar\'e inequality.
Assume that $E\subset X$ is a uniformly $p$-fat closed set, let
$w\in E$ and $0<r< (1/8) \diam(X)$, and let $E_{B}$ be as in Lemma~\ref{l.truncation}
for $B=B(w,r)$.
Let $u\in \Lip(X)$ and let $g$ be a $p$-weak upper gradient of $u$
such that $u = 0 = g$ in an open set $U\subset X$ satisfying the condition $\dist(E_{B},X\setminus U)>0$
(or the condition $X=U$). Then  
\begin{equation}\label{e.int}
\int_{8\tau B\setminus E_{B}}\frac{\lvert u(x)\rvert^p}{d_{E_{B}}(x)^p}\,d\mu(x) \le C_H\int_{8\tau^2 B} g(x)^p\,d\mu(x)\,.
\end{equation}
Here $C_H=C_{X,p,c_0}$ and the number $0<c_0\le 1$ is the constant from
the uniform $p$-fatness condition~\eqref{e.fatness} for $E$.
\end{proposition}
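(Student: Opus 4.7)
The plan is to adapt the Wannebo-style iterative argument from~\cite[Theorem~3]{MR2854110} to the truncated obstacle $E_B$, and then promote the resulting global-tail inequality to the local one required by the proposition via the absorption Lemma~\ref{l.absorb}(i). The three structural ingredients are the capacitary Poincar\'e inequality~\eqref{e.capacity_P}, the uniform $p$-fatness of $E$, and Lemma~\ref{l.pallot}, which at every dyadic scale supplies a ball $\widehat{B}$ centered on $E$ whose halved copy $\overline{2^{-1}\widehat{B}}$ sees $E$ and $E_B$ identically.

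First I would decompose $(8\tau B)\setminus E_B$ into dyadic Whitney-type shells $A_m=\{x : 2^{-m}r<d_{E_B}(x)\le 2^{-m+1}r\}$ (only finitely many of which meet $8\tau B$). For each $x\in A_m$, Lemma~\ref{l.pallot} yields a ball $\widehat{B}_x=B(y_{x,m},2^{-m-1}r)$ with $y_{x,m}\in E$, with controlled dilates $\sigma\widehat{B}_x\subset B(x,\sigma 2^{-m+2}r)$, and with $\overline{2^{-1}\widehat{B}_x}\cap E=\overline{2^{-1}\widehat{B}_x}\cap E_B$. A $5r$-covering at each scale $m$ then selects a pairwise disjoint subfamily $\{\widehat{B}_i^m\}_i$ whose bounded dilates cover $A_m$.

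On each $\widehat{B}_i^m$, the capacitary Poincar\'e inequality~\eqref{e.capacity_P} applied to $u$, combined with the inclusion $\overline{2^{-1}\widehat{B}_i^m}\cap E_B\subset\{u=0\}$ (from $u|_{E_B}=0$ and Lemma~\ref{l.pallot}), gives
\[
\int_{\widehat{B}_i^m}|u|^p\,d\mu\le\frac{C\mu(\widehat{B}_i^m)}{\cp_p\bigl(\overline{2^{-1}\widehat{B}_i^m}\cap E,\widehat{B}_i^m\bigr)}\int_{\tau\widehat{B}_i^m}g^p\,d\mu.
\]
Uniform $p$-fatness of $E$ at the center $y_{x_i,m}\in E$, combined with the capacity--measure comparison~\eqref{e.comparison}, bounds the capacity below by $Cc_0\mu(\widehat{B}_i^m)(2^{-m}r)^{-p}$, producing the scale-$m$ bound $\int_{\widehat{B}_i^m}|u|^p\,d\mu\le Cc_0^{-1}(2^{-m}r)^p\int_{\tau\widehat{B}_i^m}g^p\,d\mu$. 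Using $d_{E_B}\ge 2^{-m}r$ on $A_m$, bounded overlap of $\{\tau\widehat{B}_i^m\}_i$ at fixed $m$, and the fact that $A_m$ is covered by fixed dilates of the $\widehat{B}_i^m$ converts this into a shell-by-shell Hardy estimate.

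Summing over $m$ does not close by itself, because the enlargements $\tau\widehat{B}_i^m$ leak across adjacent shells and a point at depth $\sim 2^{-k}r$ can sit inside dilates coming from many coarser scales. Here I would invoke Wannebo's device: insert an auxiliary power $d_{E_B}^{-\beta}$ with $\beta>0$ small, which supplies geometric decay across scales and drives the cumulative overlap constant strictly below one. Carrying this out carefully produces an inequality of precisely the shape required by Lemma~\ref{l.absorb}(i), namely
\[
\int_{B^*\setminus E_B}\frac{|u|^p}{d_{E_B}^p}\,d\mu\le C_1\int_{\sigma B^*\setminus E_B}\frac{|u|^p}{d_{E_B}^p}\,d\mu+C_2\int_{\sigma B^*}g^p\,d\mu,
\]
with $B^*=\varsigma B$, $\sigma B^*\supseteq 8\tau B$, and $C_1<(1+C_{C_D,\sigma,\varsigma,p})^{-1}$. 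Lemma~\ref{l.absorb}(i) with $q=p$ then absorbs the tail and delivers~\eqref{e.int}. The main obstacle is the quantitative bookkeeping that forces $C_1$ small enough for the absorption to succeed, balancing the $5r$-covering overlap constants, the cross-shell leakage of the dilates $\tau\widehat{B}_i^m$, and the Wannebo decay exponent $\beta$; the hypothesis that $u$ and $g$ vanish on a neighborhood of $E_B$ guarantees that the tail integral is a priori finite, legitimizing the absorption step.
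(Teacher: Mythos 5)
Your overall architecture is the paper's: dyadic shells in $d_{E_B}$, the capacitary Poincar\'e inequality~\eqref{e.capacity_P} on the fat balls supplied by Lemma~\ref{l.pallot}, a Wannebo-type weighted summation over scales, and finally absorption via Lemma~\ref{l.absorb}(i) with the same parameters (the paper takes $\varsigma=2$, $\sigma=4\tau$). The cosmetic difference in the covering (you select disjoint balls centered on $E$ via a $5r$-covering; the paper covers each shell $G_m$ by boundedly overlapping balls centered on $G_m$ and finds the fat ball $\widehat B$ inside each) is immaterial, and your remark that the vanishing of $u,g$ near $E_B$ makes the tail integral finite is exactly the point that legitimizes absorption.

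However, there is one genuine gap: your account of how the small constant $C_1$ is produced. You assert that inserting the weight $d_{E_B}^{-\beta}$ ``supplies geometric decay across scales and drives the cumulative overlap constant strictly below one.'' It does not. Summing the shell estimates against the weight $2^{m(p+\beta)}r^{-p-\beta}$ controls the unbounded cross-scale multiplicity (the factor $k+\ell$ becomes $\sum_{m\le k+\ell}2^{m\beta}\lesssim 2^{k\beta}/\beta$), but at the price of a constant of order $1/\beta$, which is \emph{large}; by itself this yields only the weighted inequality
\[
\int_{2B\setminus E_B}\frac{|v|^p}{d_{E_B}^{p+\beta}}\,d\mu\;\le\;\frac{C_{X,p}}{c_0\,\beta}\int_{8\tau B\setminus E_B}\frac{g_v^p}{d_{E_B}^{\beta}}\,d\mu,
\]
which is not of the form required by Lemma~\ref{l.absorb}(i) and contains no small parameter. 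The missing idea is the substitution $v=u\,d_{E_B}^{\beta/p}$ (after a suitable truncation of $u$ outside $10\tau B$ so that the Leibniz and chain rules apply): the left-hand side then becomes exactly $\int |u|^p d_{E_B}^{-p}$, while the Leibniz rule gives $g_v\le g\,d_{E_B}^{\beta/p}+(\beta/p)|u|\,d_{E_B}^{\beta/p-1}$, so the right-hand side splits into $\frac{C}{c_0\beta}\int g^p$ plus $\frac{C}{c_0}\beta^{p-1}\int |u|^p d_{E_B}^{-p}$. It is the net factor $\beta^{p}\cdot\beta^{-1}=\beta^{p-1}$, small precisely because $p>1$, that makes $C_1$ small enough for Lemma~\ref{l.absorb}(i); without this substitution your plan does not close.
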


Recall that we do not assume $X$ to be complete,
and hence it is not necessarily true
that $\dist(K,X\setminus U)>0$
whenever $K$ is a closed subset of a bounded open set $U$.
For this reason we make in Proposition~\ref{p.Wannebo} the explicit 
assumption that $\dist(E_{B},X\setminus U)>0$.

The proof of Proposition~\ref{p.Wannebo} is based
upon covering and absorption arguments, and it 
will be completed at the end of this section.
We begin with Lemmata~\ref{l.aux1} and~\ref{l.aux2}
that provide information concerning the individual balls in the following covering families.
For the rest of this section %~\ref{s.wannebo} 
we assume that $p$, $X$, $E$, $B=B(w,r)$, and $E_{B}$ are as in Proposition~\ref{p.Wannebo}
(these are considered arbitrary but fixed).

For each $m\in\Z$, let us write  
\[
G_m= \{x\in 8\tau B \,:\, 2^{-m}r\le d_{E_{B}}(x) <2^{-m+1}r\}
\] 
and
\[
\widetilde{G}_m = \bigcup_{k=m}^\infty G_k=\{x\in 8\tau B\,:\, 0< d_{E_{B}}(x) <2^{-m+1}r\}\,.
\]  
For every $m\in\N_0$ we let $\mathcal{G}_m$ be a (countable) cover of $G_m$ with open balls $\widetilde{B}$ 
that are centered at $G_m$ and of radius $2^{-m+2}r$. 
Moreover, we require that 
$\{2^{-1}\widetilde{B}\,:\,\widetilde{B}\in\mathcal{G}_m\}$ is a disjoint family,
whence there exits $C=C_{C_D,\tau}>0$ such that  
\begin{equation}\label{e.finover}
 \sum_{\widetilde{B}\in \mathcal{G}_m} \chi_{\tau \widetilde{B}}\le C\,.
 \end{equation}  
The existence of such a cover follows using a maximal packing argument and
the doubling property of $\mu$. 

\begin{lemma}\label{l.aux1}
Let us define $\ell = \lceil \log_2(\tau)\rceil+2$. Then, for each 
$m\in \N_0$ and every ball 
$\widetilde{B}\in\mathcal{G}_m$, we have
\begin{equation}\label{e.ees}
\tau \widetilde B\setminus E_{B}\subset \widetilde{G}_{m-\ell}\,.
\end{equation}
\end{lemma}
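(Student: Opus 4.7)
The plan is a direct triangle-inequality computation whose only subtleties are verifying that $\tau\widetilde{B}$ actually lies in $8\tau B$ and that the choice of $\ell$ is numerically correct. Fix $\widetilde{B}=B(z_0,2^{-m+2}r)\in\mathcal{G}_m$, where by construction $z_0\in G_m$; in particular $d_{E_{B}}(z_0)<2^{-m+1}r$ and $z_0\in 8\tau B$.

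I will first observe that $\tau\widetilde{B}\subset 8\tau B$. Since $w\in E_{B}$ and $E_{B}\subset\overline{B}$ by Lemma~\ref{l.truncation}, any $y\in E_{B}$ witnessing $d_{E_{B}}(z_0)<2^{-m+1}r$ satisfies $d(y,w)\le r$, so $d(z_0,w)<2^{-m+1}r+r\le 3r$ (as $m\ge 0$). For $x\in\tau\widetilde{B}$ this yields
$$d(x,w)\le d(x,z_0)+d(z_0,w)<\tau\cdot 2^{-m+2}r+3r\le 4\tau r+3\tau r=7\tau r<8\tau r,$$
where $\tau\ge 1$ was used to absorb the $3r$ term.

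Next, if $x\in\tau\widetilde{B}\setminus E_{B}$ then $d_{E_{B}}(x)>0$ because $E_{B}$ is closed, and the triangle inequality together with $d(x,z_0)<\tau\cdot 2^{-m+2}r$ gives
$$d_{E_{B}}(x)\le d(x,z_0)+d_{E_{B}}(z_0)<\tau\cdot 2^{-m+2}r+2^{-m+1}r=2^{-m+1}(2\tau+1)r.$$
With $\ell=\lceil\log_2\tau\rceil+2$ we have $2^\ell\ge 4\tau$, and since $\tau\ge 1$ this strictly exceeds $2\tau+1$. Hence $d_{E_{B}}(x)<2^{\ell-m+1}r=2^{-(m-\ell)+1}r$, which combined with $x\in 8\tau B$ places $x$ in $\widetilde{G}_{m-\ell}$, as claimed.

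There is no real obstacle; the argument is essentially bookkeeping. The only points requiring a little care are ensuring that the factor $2^\ell$ strictly dominates $2\tau+1$ (which forces $\ell$ to be at least $\lceil\log_2\tau\rceil+2$ rather than $+1$) and invoking $E_{B}\subset\overline{B}$ to keep $\tau\widetilde{B}$ inside $8\tau B$ uniformly in $m\ge 0$.
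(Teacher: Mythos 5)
Your proof is correct and follows essentially the same route as the paper's: a triangle-inequality bound on $d_{E_B}(x)$ via the center of $\widetilde B$, plus a separate check that $\tau\widetilde B\setminus E_B$ stays inside $8\tau B$ (the paper bounds $d(x,w)\le\dist(x,E_B)+r$ using $E_B\subset\overline B$, while you bound $d(z_0,w)$ first, but this is the same bookkeeping). The numerical verification that $2^\ell\ge 4\tau>2\tau+1$ matches the paper's choice of $\ell$.
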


\begin{proof}
Fix $m\in\N_0$ and  
$\widetilde{B}\in\mathcal{G}_m$.
By definition, we have $\widetilde{B}=B(x_{\widetilde{B}}, 2^{-m+2}r)$ with $x_{\widetilde{B}}\in G_m$.
Let $x\in \tau \widetilde{B}\setminus E_{B}$. Then $d_{E_{B}}(x)>0$. 
Moreover, 
\begin{align*}
d_{E_{B}}(x)=\dist(x,E_{B})&\le d(x,x_{\widetilde{B}}) + \dist(x_{\widetilde{B}},E_{B})\\
& < \tau 2^{-m+2}r + 2^{-m+1} r
< \tau 2^{-m+3}r \le  2^{-(m-\ell)+1}r\,.
\end{align*}
Since $m\ge 0$, a modification of the previous estimate also yields
\begin{align*}
d(x,w) &\le \dist(x,\overline{B})+ r \le \dist(x,E_{B}) + r < 4\tau r + 2r + r <8\tau r\,,
\end{align*}
and it follows that $x\in B(w,8 \tau r)=8\tau B$. We can now conclude 
that $x\in \widetilde{G}_{m-\ell}$.
\end{proof}

The uniform $p$-fatness of $E$ is exclusively used in the following lemma.

\begin{lemma}\label{l.aux2}
Let $v$ be a Lipschitz function on $X$ such that 
$v = 0$ on $E_{B}$ and let $g$ be a $p$-weak upper gradient of $v$.
Then, for every $m\in\N_0$ and each $\widetilde{B}\in\mathcal{G}_m$,
\begin{equation}\label{e.single}
\int_{\widetilde{B}}\lvert v(x)\rvert^p\,d\mu(x)
\le \frac{C_{X,p}}{c_0} 2^{-mp} r^p\int_{\tau \widetilde{B}}g(x)^p\,d\mu(x)\,.
\end{equation} 
\end{lemma}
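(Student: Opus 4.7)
The plan is to localize the integral to a small ball $\widehat{B}$ provided by Lemma~\ref{l.pallot}---on which the uniform $p$-fatness of the original set $E$ can be invoked---apply the capacitary Poincar\'e inequality~\eqref{e.capacity_P} there, and then extend the bound from $\widehat{B}$ to the larger ball $\widetilde{B}$ via a Poincar\'e oscillation argument.

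More precisely, I first apply Lemma~\ref{l.pallot} with $x=x_{\widetilde{B}}$, the center of $\widetilde{B}$. Since $x\in G_m$ one has $d_{E_{B}}(x)<2^{-m+1}r$, so the lemma produces a ball $\widehat{B}=B(y,2^{-m-1}r)$ with $y\in E$ such that $\ol{2^{-1}\widehat{B}}\cap E=\ol{2^{-1}\widehat{B}}\cap E_{B}$ and, via $\sigma=1$, $\widehat{B}\subset \widetilde{B}$. Since $y\in E$, uniform $p$-fatness of $E$ at scale $2^{-m-2}r$ combined with the capacity--measure comparison~\eqref{e.comparison} gives
\[
\cp_p\bigl(\ol{2^{-1}\widehat{B}}\cap E,\widehat{B}\bigr) \ge c_0\,\cp_p\bigl(\ol{2^{-1}\widehat{B}},\widehat{B}\bigr) \ge C^{-1}c_0\,\mu(\widehat{B})(2^{-m}r)^{-p}.
\]
Since $v=0$ on $E_{B}$, the set $\ol{2^{-1}\widehat{B}}\cap\{v=0\}$ contains $\ol{2^{-1}\widehat{B}}\cap E_{B}=\ol{2^{-1}\widehat{B}}\cap E$, so~\eqref{e.capacity_P} on the ball $\widehat{B}$, combined with the capacity lower bound above, yields
\[
\int_{\widehat{B}}|v|^p\,d\mu \le \frac{C_{X,p}}{c_0}(2^{-m}r)^p\int_{\tau\widehat{B}}g^p\,d\mu.
\]

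To pass from $\widehat{B}$ to $\widetilde{B}$, I decompose $v=(v-v_{\widehat{B}})+v_{\widehat{B}}$. The $(p,p)$-Poincar\'e inequality on $\widetilde{B}$ gives $\int_{\widetilde{B}}|v-v_{\widetilde{B}}|^p\,d\mu\le C(2^{-m}r)^p\int_{\tau\widetilde{B}}g^p\,d\mu$; together with $|v_{\widetilde{B}}-v_{\widehat{B}}|^p\le \vint_{\widehat{B}}|v-v_{\widetilde{B}}|^p\,d\mu$ and the doubling estimate $\mu(\widetilde{B})\le C\mu(\widehat{B})$ (which holds since $\widetilde{B}\subset B(y,2r_{\widetilde{B}})$ and $y\in\widehat{B}$), this controls $\int_{\widetilde{B}}|v-v_{\widehat{B}}|^p\,d\mu$ by $C(2^{-m}r)^p\int_{\tau\widetilde{B}}g^p\,d\mu$. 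For the constant term, Jensen's inequality gives $|v_{\widehat{B}}|^p\le \vint_{\widehat{B}}|v|^p\,d\mu$, so $\mu(\widetilde{B})|v_{\widehat{B}}|^p$ is dominated by the bound on $\int_{\widehat{B}}|v|^p\,d\mu$ multiplied by the doubling ratio $\mu(\widetilde{B})/\mu(\widehat{B})\le C$. Summing the two contributions, and using $\tau\widehat{B}\subset C'\tau\widetilde{B}$ for a universal geometric constant $C'$ that can be absorbed into the Poincar\'e dilation $\tau$, delivers~\eqref{e.single}.

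The main obstacle is the careful constant-tracking: the target form~\eqref{e.single} specifies precisely $\tau\widetilde{B}$ on the right, while the localized capacitary estimate naturally produces $\tau\widehat{B}$ on a substantially smaller ball, and the inclusion $\tau\widehat{B}\subset\tau\widetilde{B}$ is only true up to a universal dilation factor. The conceptual crux, however, is Lemma~\ref{l.pallot}: the equality $\ol{2^{-1}\widehat{B}}\cap E=\ol{2^{-1}\widehat{B}}\cap E_{B}$ is exactly what enables uniform $p$-fatness of the original set $E$ (rather than of the truncated set $E_{B}$, for which no fatness hypothesis is available) to be converted into a quantitative lower bound for the capacity of the zero set of $v$ appearing in the capacitary Poincar\'e inequality.
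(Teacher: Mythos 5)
Your proof is correct and follows essentially the same route as the paper: localize to the ball $\widehat{B}$ from Lemma~\ref{l.pallot}, apply the capacitary Poincar\'e inequality~\eqref{e.capacity_P} together with uniform $p$-fatness and the comparison~\eqref{e.comparison} there, and transfer to $\widetilde{B}$ by a Poincar\'e--oscillation--doubling argument. Your worry about the dilation is unfounded: since $\widetilde{B}=B(x_{\widetilde{B}},2^{-m+2}r)$, the conclusion $\sigma\widehat{B}\subset B(x_{\widetilde{B}},\sigma 2^{-m+2}r)$ of Lemma~\ref{l.pallot} with $\sigma=\tau$ gives $\tau\widehat{B}\subset\tau\widetilde{B}$ exactly, so no extra geometric factor needs to be absorbed.
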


\begin{proof}
Fix $m\in\N_0$ and $\widetilde{B}=B(x_{\widetilde{B}}, 2^{-m+2}r)\in \mathcal{G}_m$. 
Then, by definition, $x_{\widetilde{B}}\in G_m$.
We apply Lemma \ref{l.pallot} and thereby associate to $\widetilde{B}$ a
smaller open ball $\widehat{B}\subset\widetilde{B}$, centered at $E$ and of radius
$2^{-m-1}r<(1/8)\diam(X)$.
Note first that
\begin{align*}
\vint_{\widetilde{B}}\lvert v(x)\rvert^p\,d\mu(x) 
&\le C_p\bigg(\vint_{\widetilde{B}}\lvert v(x)-v_{\widetilde{B}}\rvert^p\,d\mu(x) + 
  \lvert v_{\widetilde{B}} -v_{\widehat{B}}\lvert^p + \lvert v_{\widehat{B}}\rvert^p \bigg)\,.
\end{align*}
Here, by  H\"older's inequality and the doubling condition \eqref{e.doubling},
\begin{align*}
 \lvert v_{\widetilde{B}} -v_{\widehat{B}}\lvert^p&\le 
\bigg( \intav_{\widehat{B}} \lvert v(x)-v_{\widetilde{B}}\rvert\,d\mu(x) \bigg)^p
\le C_{C_D}\vint_{\widetilde{B}}\lvert v(x)-v_{\widetilde{B}}\rvert^p\,d\mu(x)\,,
\end{align*}
and therefore, by the $(p,p)$-Poincar\'e inequality, %from~\eqref{e.all_q},
we have that
\begin{align*}
C_p\bigg(\vint_{\widetilde{B}}\lvert v(x)-v_{\widetilde{B}}\rvert^p\,d\mu(x) + 
     \lvert v_{\widetilde{B}} -v_{\widehat{B}}\lvert^p \bigg)
\le C_{X,p} 2^{-mp}r^p\vint_{\tau \widetilde{B}}g(x)^p\,d\mu(x)\,.
\end{align*} 
On the other hand, by the capacitary Poincar\'e inequality~\eqref{e.capacity_P}, 
\begin{align*}
\lvert v_{\widehat{B}}\rvert^p\le\intav_{\widehat{B}}\lvert v(x)\rvert^p\,d\mu(x)
\le \frac{C_{X}}{\cp_p(\overline{2^{-1}\widehat{B}}\cap \{v=0\},\widehat B)} \int_{\tau\widehat{B}}g(x)^p\,d\mu(x)\,.
\end{align*}
Recall that
$v(x)=0$ whenever $x\in E_{B}$ (by assumption) and
 $\overline{2^{-1}\widehat{B}}\cap E_{B}=\overline{2^{-1}\widehat{B}}\cap E$ by Lemma~\ref{l.pallot}.  
Therefore, using monotonicity, 
the uniform $p$-fatness condition~\eqref{e.fatness},
and the comparison inequality~\eqref{e.comparison}, we obtain
\begin{align*}
\cp_p(\overline{2^{-1}\widehat{B}}\cap \{v=0\},\widehat B)&\ge 
\cp_p(\overline{2^{-1}\widehat{B}}\cap E_{B},\widehat B)=\cp_p(\overline{2^{-1}\widehat{B}}\cap E,\widehat B)\\
& \ge c_0 \cp_p(\overline{2^{-1}\widehat{B}},\widehat{B})\ge
\frac{c_0\,\mu( 2^{-1}\widehat{B})}{C_{X,p} 2^{-mp}r^p}\,. 
\end{align*}
Finally, since $\tau \widehat{B}\subset \tau \widetilde{B}$, 
it follows that
\begin{align*}
C_p \lvert v_{\widehat{B}}\rvert^p
\le C_p\intav_{\widehat{B}}\lvert v(x)\rvert^p\,d\mu(x)
\le \frac{C_{X,p}}{c_0} 
\frac{2^{-mp} r^p}{\mu(2^{-1}\widehat{B})}\int_{\tau\widetilde{B}}g(x)^p\,d\mu(x)\,.
\end{align*}
Inequality~\eqref{e.single} follows from the above estimates 
and the doubling condition~\eqref{e.doubling}.
\end{proof}

\begin{proof}[Proof of Proposition \ref{p.Wannebo}] 
Let us first assume that $v\in \Lip_0(X\setminus E_{B})$ and that
$g_v$ is a $p$-weak upper gradient of $v$ 
that also vanishes in the set $E_{B}$. 
Then, by summing the inequalities~\eqref{e.single}  
and using~\eqref{e.finover} and~\eqref{e.ees}
we obtain, for every $m\in\N_0$,
\begin{equation}\label{e.coll1}
\begin{split}
\int_{G_m} \lvert v(x)\rvert^p\,d\mu(x)&\le \sum_{\widetilde{B}\in\mathcal{G}_m} \int_{\widetilde{B}} \lvert v(x)\rvert^p\,d\mu(x)
\\&\le \frac{C_{X,p}}{c_0} 2^{-mp} r^p\sum_{\widetilde{B}\in\mathcal{G}_m}\int_{\tau\widetilde{B}\setminus E_{B}}g_v(x)^p\,d\mu(x)
\\&\le \frac{C_{X,p}}{c_0} 2^{-mp} r^p\int_{\widetilde{G}_{m-\ell}} g_v(x)^p\,d\mu(x)\,.
\end{split}
\end{equation} 
Let $0<\beta<1$ be a small number, that will be fixed later. 
We multiply~\eqref{e.coll1} by $2^{m(p+\beta)}r^{-p-\beta}$ and sum the inequalities to
obtain the estimate
\begin{equation}\label{e.improvebeta}
\begin{split}
\int_{2B\setminus E_{B}} &\frac{\lvert v(x)\rvert^p }{d_{E_{B}}(x)^{p+\beta}}\,d\mu(x)\le  
  \int_{\widetilde{G}_0} \frac{\lvert v(x)\rvert^p }{d_{E_{B}}(x)^{p+\beta}}\,d\mu(x)=
  \sum_{m=0}^\infty \int_{G_m} \frac{\lvert v(x)\rvert^p }{d_{E_{B}}(x)^{p+\beta}}\,d\mu(x)
\\&\le \sum_{m=0}^\infty2^{m(p+\beta)}r^{-p-\beta} \int_{G_m} \lvert v(x)\rvert^p \,d\mu(x)\\
&\le \frac{C_{X,p}}{c_0}  r^{-\beta}\sum_{m=0}^\infty 2^{m\beta}\int_{\widetilde{G}_{m-\ell}} g_v(x)^p\,d\mu(x)\\
&=\frac{C_{X,p}}{c_0}  r^{-\beta}\sum_{k=-\ell}^\infty \sum_{m=0}^{k+\ell} 2^{m\beta}\int_{G_{k}} g_v(x)^p\,d\mu(x)\\
&\le \frac{C_{X,p}}{c_0 \beta}  \sum_{k=-\ell}^\infty 2^{k\beta}r^{-\beta}\int_{G_{k}} g_v(x)^p\,d\mu(x)
\le  \frac{C_{X,p}}{c_0 \beta}  \int_{8\tau B\setminus E_{B}} \frac{g_v(x)^p}{d_{E_{B}}(x)^\beta}\,d\mu(x)\,.
\end{split}
\end{equation} 

Now we come to the main line of the argument. 
Let $u$ be a Lipschitz function on $X$ and let $g$ be a
$p$-weak upper gradient of $u$, both of
which vanish in an open set $U\subset X$ satisfying the condition $\dist(E_{B},X\setminus U)>0$. 
We aim to show that inequality~\eqref{e.int} holds,
and so we may assume that $g\in L^p(8\tau B)$ (recall that $\tau \ge 1$).

Consider the Lipschitz function on $A=8\tau B \cup (X\setminus 10\tau B)$ that 
coincides with $u$ in $8\tau B$ and vanishes outside $10\tau B$,  
and let $\tilde u$ be the McShane extension \eqref{McShane} of this function to all of $X$.
Then the function 
\[\tilde g= g\chi_{8\tau B} + \Lip(\tilde u,\cdot)\chi_{X\setminus 8\tau B}\in L^p(X)\]
is a $p$-weak upper gradient of $\tilde u$, cf.\ the proof of \cite[Theorem~2.6]{MR2867756}.
Define $v(x)=\tilde u(x) d_{E_{B}}(x)^{\beta/p}$ for every $x\in X$.
Then $v(x)=u(x) d_{E_{B}}(x)^{\beta/p}$ for every $x\in 8\tau B$ and 
in particular $v$ vanishes in $E_{B}$. 
Moreover, by applying the assumptions on $u$ and $g$ in combination with 
the Leibniz and chain rules of
Theorems~2.15 and~2.16 in~\cite{MR2867756}, we find that $v$ has a 
$p$-weak upper gradient $g_v$ such that
\[
g_v(x) \le  g(x) d_{E_{B}}(x)^{\beta/p} + \frac{\beta}{p} \lvert u(x)\rvert d_{E_{B}}(x)^{\beta/p-1}
\]
for every $x\in 8\tau B$; in particular, also $g_v$ vanishes on the set $E_{B}$.
Using estimate~\eqref{e.improvebeta} for the pair $v$ and $g_v$, we obtain  
\begin{align*}
\int_{2B\setminus E_{B}} \frac{\lvert u(x)\rvert^p}{d_{E_{B}}(x)^p}\,d\mu(x)& 
  = \int_{2B\setminus E_{B}} \frac{\lvert v(x)\rvert^p}{d_{E_{B}}(x)^{p+\beta}}\,d\mu(x)
\le \frac{C_{X,p}}{c_0 \beta}  \int_{8\tau B\setminus E_{B}} \frac{g_v(x)^p}{d_{E_{B}}(x)^\beta}\,d\mu(x)\\
&\le \frac{C_{X,p}}{c_0 \beta}  \int_{8\tau B} g(x)^p\,d\mu(x) + \frac{C_{X,p}}{c_0} 
\beta^{p-1} \int_{8\tau B\setminus E_{B}} \frac{\lvert u(x)\rvert^p}{d_{E_{B}}(x)^p}\,d\mu(x)\,.
\end{align*} 
We can now apply Lemma~\ref{l.absorb} with parameters
\[
\varsigma=2\,,\qquad \sigma = 4\tau \,,\qquad q=p\,,\qquad C_1 = \frac{C_{X,p}}{c_0} \beta^{p-1}\,,\qquad 
C_2=\frac{C_{X,p}}{c_0\beta}\,.
\]
Recall our convention in \S\ref{s.constants} and choose $0<\beta<1$, depending on $C_X$, $p$, and $c_0$, 
such that
\[
C_3=1-C_1(1+C_{C_D,\sigma,\varsigma,p})\ge \frac{1}{2}\,.
\]
Then, Lemma~\ref{l.absorb} yields that
\begin{align*}
\int_{8 \tau B\setminus E_{B}}\frac{\lvert u(x)\rvert^p}{d_{E_{B}}(x)^p}\,d\mu(x)&
\le C_{X,p,c_0}\int_{8\tau^2 B} g(x)^p\,d\mu(x)\,,
\end{align*}
and this concludes the proof.
\end{proof}

Before entering the final stage in our proof of the self-improvement of uniform $p$-fatness,
we take a small side step and give a proof for
Theorem~\ref{t.Wannebo_intro} that was stated in the introduction.

\begin{proof}[Proof of Theorem~\ref{t.Wannebo_intro}]
Fix $w\in E$ and $0<r<(1/8)\diam(X)$, and 
let $E_{B}$ be as in Lemma~\ref{l.truncation}
for the ball $B=B(w,r)$.
Fix $u\in \Lip_0(X\setminus E)$ and a $p$-weak upper gradient $g$ of $u$. 
Since we first aim to prove estimate~\eqref{e.est} below, we can
assume that  $g\in L^p(8\tau^2 B)$. 

For every $\delta>0$, we define a Lipschitz function $u_\delta=\max\{0,\lvert u\rvert-\delta\}$.
Since $g$ is clearly a $p$-weak upper gradient of $u_\delta$, it is straightforward
to show that the function
\[
h=g\chi_{8\tau^2 B}+\Lip(u_\delta,\cdot)\chi_{X\setminus 8\tau^2 B}
\]
is a $p$-weak upper gradient of $u_\delta$, cf.\ the proof of \cite[Theorem~2.6]{MR2867756}.
Since $u_\delta$ vanishes in the set $U_\delta=\{\lvert u\rvert < \delta\}$ 
 we can apply the local version of the glueing lemma \cite[Lemma~2.19]{MR2867756}
with  $u_\delta$ and $h$. From this we can deduce
that $g_\delta=h\chi_{X\setminus U_\delta}$ is  a $p$-weak upper gradient of $u_\delta$.
Observe that both $u_\delta$ and $g_\delta$ vanish in the open
neighbourhood $U_\delta$ of $E$ and $\dist(E_B,X\setminus U_\delta)>0$ if $U_\delta \not=X$.
Since $E\cap (1/2)B\subset E_B$ and $w\in E_B$, we see that $d_E=d_{E_B}$ in $(1/4)B$. Hence,
 by monotone convergence and Proposition \ref{p.Wannebo}, we conclude that
\begin{equation}\label{e.est}
\begin{split}
\int_{(1/4)B\setminus E}\frac{\lvert u(x)\rvert^p}{d_{E}(x)^p}\,d\mu(x)
&=\lim_{\delta\to 0}\int_{(1/4)B\setminus E}\frac{\lvert u_\delta(x)\rvert^p}{d_{E_B}(x)^p}\,d\mu(x)\\
&\le C_H \liminf_{\delta\to 0} \int_{8\tau^2 B} g_\delta(x)^p\,d\mu(x)
\le C_H\int_{8\tau^2 B} g(x)^p\,d\mu(x)\,.
\end{split}
\end{equation}
The desired inequality~\eqref{e.int_intro} now follows by a simple change of variables.
\end{proof}

\subsection{Improvement}\label{s.improvement}

In this section we improve the `local integral Hardy inequality', that 
was established in  Proposition~\ref{p.Wannebo},  by adapting
ideas from Koskela--Zhong~\cite{MR1948106} to the present setting
and applying again the absorption Lemma~\ref{l.absorb}.
This improvement argument constitutes the final step in the proof of Theorem~\ref{t.main}.

\begin{proposition}\label{p.improved}
Let $1<p<\infty$ 
and suppose that $X$ supports the improved $(q,q)$-Poincar\'e inequalities (PI)
for $p_0\le q\le p$. 
Assume that $E\subset X$ is a uniformly $p$-fat closed set, let
$w\in E$ and $0<r< (1/8) \diam(X)$, and let $E_{B}$ be as in Lemma~\ref{l.truncation}
for $B=B(w,r)$.
Then  there exists constants $0<\varepsilon=\varepsilon_{X,p_0,p,C_H}<p-p_0$ and 
$C>0$ such that the inequality
\begin{equation}\label{e.improved}
\int_{12\tau^2 B\setminus E_{B}} \frac{\lvert u(x)\rvert^{p-\varepsilon}}{d_{E_{B}}(x)^{p-\varepsilon}}\,d\mu(x)
\le C \int_{12\tau^3 B}  \Lip(u,x)^{p-\varepsilon}\,d\mu(x)
\end{equation}
holds whenever $u\in \Lip_0(X\setminus E)$.
Here $C_H=C_{X,p,c_0}$ is the constant from Proposition~\ref{p.Wannebo}.
\end{proposition}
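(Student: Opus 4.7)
The plan is to adapt the self-improvement scheme of Koskela and Zhong \cite{MR1948106} to this local and metric setting, using Proposition~\ref{p.Wannebo} as the starting $p$-Hardy input and the absorption Lemma~\ref{l.absorb}(ii) as the mechanism that closes the estimate at the improved exponent $q=p-\varepsilon$. Fix $u\in \Lip_0(X\setminus E)$ and, for small $\varepsilon\in(0,p-p_0)$, set $q=p-\varepsilon$. Following the structure of the proof of Proposition~\ref{p.Wannebo}, I would cover each annular region $G_m=\{x\in 8\tau B:2^{-m}r\le d_{E_{B}}(x)<2^{-m+1}r\}$ by the overlap-bounded family $\mathcal{G}_m$ of balls $\widetilde B=B(x_{\widetilde B},2^{-m+2}r)$, and to each such $\widetilde B$ associate the smaller ball $\widehat B\subset\widetilde B$ centered on $E$ supplied by Lemma~\ref{l.pallot}, so that~\eqref{e.bubble} lets the uniform $p$-fatness of $E$ be invoked on $\widehat B$.

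The key per-ball estimate is a Maz'ya-type decomposition of $\vint_{\widetilde B}|u|^q\,d\mu$ into an oscillation part $\vint_{\widetilde B}|u-u_{\widehat B}|^q\,d\mu$ and an average part $|u_{\widehat B}|^q$. To the oscillation part I would apply the $(q,q)$-Poincar\'e inequality from (PI), gaining $(2^{-m}r)^q\vint_{\tau\widetilde B}\Lip(u,\cdot)^q\,d\mu$. For the average part I would use the capacitary Poincar\'e inequality~\eqref{e.capacity_P} at exponent $p$ (not $q$): since $u$ vanishes on $\overline{\tfrac12\widehat B}\cap E_{B}=\overline{\tfrac12\widehat B}\cap E$, the fatness condition~\eqref{e.fatness} together with the comparison~\eqref{e.comparison} produces the required lower bound on the capacity, giving $|u_{\widehat B}|^p\lesssim (2^{-m}r)^p\mu(\widehat B)^{-1}\int_{\tau\widetilde B}\Lip(u,\cdot)^p\,d\mu$. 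Converting this $L^p$ control into an $L^q$ control via H\"older's inequality introduces a residual annular factor which, after reusing Proposition~\ref{p.Wannebo} on a slightly enlarged ball, is essentially a small multiple $\delta(\varepsilon)$ of $\vint_{\widetilde B}(|u|/d_{E_{B}})^q\,d\mu$, with $\delta(\varepsilon)\to 0$ as $\varepsilon\to 0^+$.

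Multiplying each per-ball estimate by $(2^{-m}r)^{-q}$, summing over $\widetilde B\in\mathcal{G}_m$ and then over $m\ge 0$ using the bounded overlap~\eqref{e.finover} and the inclusion~\eqref{e.ees}, and enlarging the base ball to $12\tau^2 B$, should then yield an inequality of the form
\begin{equation*}
\int_{12\tau^2 B\setminus E_{B}}\frac{|u|^q}{d_{E_{B}}^q}\,d\mu
\le \delta(\varepsilon)\int_{12\tau^3 B\setminus E_{B}}\frac{|u|^q}{d_{E_{B}}^q}\,d\mu
+ C\int_{12\tau^3 B}\Lip(u,\cdot)^q\,d\mu,
\end{equation*}
with $C=C_{X,p_0,p,C_H}$. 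Applying Lemma~\ref{l.absorb}(ii) with $\varsigma$ and $\sigma$ matching $12\tau^2 B$ and $12\tau^3 B$, and $\varepsilon$ chosen small enough that $\delta(\varepsilon)(1+C_{C_D,\sigma,\varsigma,p})<1$, one obtains $C_3\ge\tfrac12$ and hence the improved estimate~\eqref{e.improved}.

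The main obstacle I anticipate is producing the residual term with a coefficient $\delta(\varepsilon)$ that is genuinely small in $\varepsilon$. The H\"older step that passes from the capacitary $L^p$ bound to an $L^q$ bound, the dependence of the $(q,q)$-Poincar\'e constant on $q$ in (PI), and the reuse of the $p$-Hardy constant $C_H$ from Proposition~\ref{p.Wannebo} on an enlarged ball all enter the determination of $\delta(\varepsilon)$; tracking them together with the summation over $m$ so that the absorbtion threshold of Lemma~\ref{l.absorb} is strictly satisfied is the delicate, and technically central, point of the argument.
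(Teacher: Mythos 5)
Your overall strategy (start from the $p$-Hardy inequality of Proposition~\ref{p.Wannebo}, derive an estimate at exponent $q=p-\varepsilon$ with a small residual coefficient $\delta(\varepsilon)$, and close with Lemma~\ref{l.absorb}(ii)) matches the paper's in outline, but the mechanism you propose for lowering the exponent does not work, and that mechanism is precisely the crux of the proposition. In your per-ball estimate the average term is controlled by the capacitary Poincar\'e inequality at exponent $p$, giving $\lvert u_{\widehat B}\rvert^p\lesssim (2^{-m}r)^p\mu(\widehat B)^{-1}\int_{\tau\widetilde B}\Lip(u,\cdot)^p\,d\mu$, and you then propose to ``convert this $L^p$ control into $L^q$ control via H\"older's inequality''. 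H\"older goes the wrong way here: raising to the power $q/p$ yields $\bigl(\int_{\tau\widetilde B}\Lip(u,\cdot)^p\,d\mu\bigr)^{q/p}$, and H\"older gives $\int_{\tau\widetilde B}\Lip(u,\cdot)^q\,d\mu\le \mu(\tau\widetilde B)^{1-q/p}\bigl(\int_{\tau\widetilde B}\Lip(u,\cdot)^p\,d\mu\bigr)^{q/p}$, i.e.\ a \emph{lower} bound for the quantity you need to bound from above. There is no way to dominate $p$-th power averages of the gradient by its $q$-th power averages without further input (that is exactly what self-improvement means), so the residual term $\delta(\varepsilon)\int(\lvert u\rvert/d_{E_{B}})^q\,d\mu$ never materializes from this computation; for the same reason the uniform $p$-fatness cannot simply be fed into a $q$-capacitary Poincar\'e inequality. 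You flag this H\"older step as the ``main obstacle'', but it is not a technical delicacy to be tracked---it is an inequality in the wrong direction.

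The missing idea is the Koskela--Zhong truncation-and-integration device, which is how the paper proceeds. For each $\lambda>0$ one restricts $u$ to the good set $F_\lambda=H_\lambda\cap G_\lambda$, where $H_\lambda=\{\lvert u\rvert\le\lambda d_{E_{B}}\}$ and $G_\lambda=\{(M_{d_{E_{B}}/2}\,\Lip(u,\cdot)^{p_0})^{1/p_0}\le\lambda\}$; a chaining argument based on the $(1,p_0)$-Poincar\'e inequality shows that $u|_{F_\lambda}$ is $(C\lambda)$-Lipschitz, so its McShane extension admits the upper gradient $\chi_{F_\lambda}\Lip(u,\cdot)+C\lambda\chi_{X\setminus F_\lambda}$, and Proposition~\ref{p.Wannebo} applies to this extension at exponent $p$. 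Multiplying the resulting level-$\lambda$ estimate by $\lambda^{-1-\varepsilon}$ and integrating over $\lambda\in(0,\infty)$ converts all $p$-th powers into $(p-\varepsilon)$-th powers via the layer-cake formula and, crucially, produces the factor $\varepsilon^{-1}$ on the left against $(p-\varepsilon)^{-1}$ on the right; this mismatch is the source of the absorbable coefficient $C_1\sim\varepsilon$. The strict inequality $p_0<p$ is then used to run the Hardy--Littlewood maximal theorem on $\Lip(u,\cdot)^{p_0}$ at exponent $(p-\varepsilon)/p_0>1$, and a final truncation $u_\delta=\max\{0,\lvert u\rvert-\delta\}$ removes the auxiliary assumption that $u$ vanishes in a neighbourhood of $E$. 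None of these steps has a substitute in your per-ball covering scheme, so the proposal as written has a genuine gap at its central step.
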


In the proof of Proposition~\ref{p.improved}, we use
the restricted maximal function $M_R u$ at $x\in X$,
for $R:X\to [0,\infty)$ and a locally integrable function $u$ on $X$,
that is defined by $M_R u(x)=\lvert u(x)\rvert$ if $R(x)=0$, and otherwise by
\[
M_R u(x) = \sup_r \intav_{B(x,r)}\lvert u(y)\rvert\,d\mu(y)\,,
\] 
where the supremum is taken over all radii $0<r<R(x)$.

\begin{proof}[Proof of Proposition \ref{p.improved}] 
Without loss of generality, we may assume that $C_H\ge 1$ in \eqref{e.int}.
We will first prove inequality \eqref{e.improved} 
under the additional assumption that $u\in \Lip(X)$ is such that 
$u=0$ in an open set $U\subsetneq X$ for which
$\dist(E,X\setminus U)>0$. 
Throughout this proof, we write $g=\Lip(u,\cdot)$; in particular, also $g=0$ in $U$.

Fix a number $\lambda>0$, and define $F_\lambda = H_\lambda \cap G_\lambda$, where
\[\begin{split}
H_\lambda & = \{x\in 8\tau^2 B\,:\, \lvert u(x)\rvert \le \lambda d_{E_{B}}(x)\}\,,\\
G_\lambda & = \big\{x\in 8\tau^2 B\,:\,  \big(  M_{d_{E_{B}}(x)/2}\,g^{p_0}(x)\big)^{1/p_0}\le \lambda\big\}\,.
\end{split}
\]
We claim that the restriction of $u$ to $F_\lambda$ is
$(C_{X,p_0}\lambda)$-Lipschitz. Indeed, let $x,y\in F_\lambda$, $x\not=y$,
be such that $d_{E_{B}}(y)\le d_{E_{B}}(x)$.
If $d_{E_{B}}(x)\ge 5\tau d(x,y)$, then
\[d_{E_{B}}(y)\ge d_{E_{B}}(x) - d(x,y) \ge 4\tau d(x,y)\,.\]
Thus a standard chaining argument \cite[Theorem~3.2]{MR1683160},
which is based on the facts that $\mu$ is doubling
and that the $(1,p_0)$-Poincar\'e inequality holds
for the pair $u$ and $g=\Lip(u,\cdot)$,
yields that
\begin{align*}
\lvert u(x)-u(y)\rvert &\le C_{X,p_0}\,d(x,y)\Big(  \big(  M_{2\tau d(x,y)}\,g^{p_0}(x)\big)^{1/p_0}
    + \big(  M_{2\tau d(x,y)}\,g^{p_0}(y)\big)^{1/p_0}\Big)\\
&\le C_{X,p_0}\,d(x,y)\Big(  \big(  M_{d_{E_{B}}(x)/2}\,g^{p_0}(x)\big)^{1/p_0}
    + \big(  M_{d_{E_{B}}(y)/2}\,g^{p_0}(y)\big)^{1/p_0}\Big)\\
&\le C_{X,p_0}\lambda d(x,y)\,. 
\end{align*}
On the other hand, if $d_{E_{B}}(y)\le d_{E_{B}}(x)\le 5\tau d(x,y)$, then
\[
\lvert u(x)-u(y)\rvert \le \lvert u(x)\rvert + \lvert u(y)\rvert \le \lambda (d_{E_{B}}(x) + d_{E_{B}}(y))
\le 10 \tau \lambda d(x,y).
\]
These two estimates show that $u$ is Lipschitz on $F_\lambda$.

Next we use the McShane extension \eqref{McShane} and extend the restriction of $u$ on $A=F_\lambda$ 
to a $(C_{X,p_0}\lambda)$-Lipschitz function $\tilde u$ on $X$.
Then also $\tilde u$ vanishes on an open set $\widetilde U\subset U$ such that 
$\dist(E_{B},X\setminus \widetilde U)>0$; 
indeed, if $x\in \widetilde U = \{x\in 8\tau^2 B\,:\,d_{E_{B}}(x)<\dist(E_{B},X\setminus U)/2\}$ 
then $u(x)=0$ and $x\in F_\lambda$ (here we use the fact that $g=\Lip(u,\cdot)=0$ in $U$), 
whence $\tilde u(x)=0$.

By~\cite[Lemma~2.19]{MR2867756},
the bounded function
\[
\tilde g(x) =  \chi_{F_\lambda}(x)g(x) + C_{X,p_0}\lambda \chi_{X\setminus F_\lambda}(x)
\]
is a $p$-weak upper gradient of $\tilde u$ 
that vanishes on $\widetilde U$. 
Hence, applying Proposition~\ref{p.Wannebo} to the pair
$\tilde u$ and $\tilde g$, we find that
\begin{align*}
\int_{(8\tau B\setminus E_{B})\cap F_\lambda} \frac{\lvert u(x)\rvert^p}{d_{E_{B}}(x)^p}\,d\mu(x)&
 \le \int_{8\tau B\setminus E_{B}} \frac{\lvert \tilde u(x)\rvert^p}{d_{E_{B}}(x)^p}\,d\mu(x)\\
&\le C_H\int_{F_\lambda} g(x)^p\,d\mu(x) +C_H C_{X,p_0}^p\lambda^p \mu(8\tau^2 B\setminus F_\lambda)
\end{align*}
and, since $C_H\ge 1$, that
\begin{align*}
\int_{(8\tau B\setminus E_{B})\cap H_\lambda} & \frac{\lvert u(x)\rvert^p}{d_{E_{B}}(x)^p}\,d\mu(x)\\
\le C_H & \int_{F_\lambda} g(x)^p\,d\mu(x)
  +C_H C_{X,p_0}^p \lambda^p \mu(8\tau^2 B\setminus F_\lambda)
  +\int_{(H_\lambda\setminus E_{B})\setminus G_\lambda}\frac{\lvert u(x)\rvert^p}{d_{E_{B}}(x)^p}\,d\mu(x)\\
\le C_H & \int_{G_\lambda} g(x)^p\,d\mu(x)
+C_H C_{X,p_0,p} \lambda^p \big(\mu( 8\tau^2 B\setminus F_\lambda)
+\mu( H_\lambda\setminus G_\lambda)\big)\\
\le  C_H & \int_{G_\lambda} g(x)^p\,d\mu(x)
+C_H C_{X,p_0,p} \lambda^p \big(\mu( 8\tau^2 B\setminus H_\lambda)
+\mu( 8\tau^2 B\setminus G_\lambda)\big)\,.
\end{align*}

The above estimate holds for all $\lambda>0$. We multiply it by $\lambda^{-1-\varepsilon}$ 
(here $0<\varepsilon<(p-p_0)/2$ is a parameter to be fixed later) and integrate the resulting 
estimate over $(0,\infty)$. This gives
\begin{align*}
\varepsilon^{-1} \int_{8\tau B\setminus E_{B}} & \frac{\lvert u(x)\rvert^{p-\varepsilon}}{d_{E_{B}}(x)^{p-\varepsilon}}\,d\mu(x)\\
\le C_H & \int_0^\infty  \lambda^{-1-\varepsilon}\int_{G_\lambda} g(x)^p\,d\mu(x)\,d\lambda\\
& + C_H C_{X,p_0,p}\int_0^\infty  \lambda^{p-1-\varepsilon} \big(\mu( 8\tau^2 B\setminus H_\lambda)
+\mu( 8\tau^2 B\setminus G_\lambda)\big)\,d\lambda\,.
\end{align*}
By the definition of $G_\lambda$, and the observation that
$g(x) \le \big(  M_{d_{E_{B}}(x)/2}\,g^{p_0}(x)\big)^{1/p_0}$ for a.e.\ $x\in 8\tau^2 B$, 
we find that the first term
on the right-hand side is dominated by
\[
C_H \varepsilon^{-1}\int_{8\tau^2 B} g(x)^{p-\varepsilon}\,d\mu(x)\,.
\]
Using the definitions
of $H_\lambda$ and $G_\lambda$, the second term on the right-hand side 
can be estimated from above by
\[
\frac{C_H C_{X,p_0,p}}{p-\varepsilon}\bigg(
\int_{8\tau^2 B\setminus E_{B}} \frac{\lvert u(x)\rvert^{p-\varepsilon}}{d_{E_{B}}(x)^{p-\varepsilon}}\,d\mu(x)
+ \int_{8\tau^2 B} \big(  M_{d_{E_{B}}(x)/2}\,g^{p_0}(x)\big)^{\frac{p-\varepsilon}{p_0}}\,d\mu(x)
\bigg)\,.
\]
Since $d_{E_{B}}(x)/2\le 4\tau^2 r$ for all $x\in 8\tau^2 B$, 
we have by the Hardy--Littlewood maximal theorem, see e.g.\ \cite[Theorem~3.13]{MR2867756}, that
\begin{align*}
\int_{8\tau^2 B} \big(  M_{d_{E_{B}}(x)/2}\,g^{p_0}(x)\big)^{\frac{p-\varepsilon}{p_0}}\,d\mu(x)
&\le \int_{X} \big(M(\chi_{12\tau^2 B}\,g^{p_0})(x)\big)^{\frac{p-\varepsilon}{p_0}}\,d\mu(x)\\
&\le C_{C_D,p_0,p,\varepsilon}\int_{12\tau^2B} g(x)^{p-\varepsilon}\,d\mu(x)\,;
\end{align*}
here $M$ denotes the usual unrestricted maximal operator.

By combining the estimates above, we obtain
\begin{align*}
\int_{8\tau B \setminus E_{B}} & \frac{\lvert u(x)\rvert^{p-\varepsilon}}{d_{E_{B}}(x)^{p-\varepsilon}}\,d\mu(x)\\
&\le C_1\int_{8\tau^2 B\setminus E_{B}} \frac{\lvert u(x)\rvert^{p-\varepsilon}}{d_{E_{B}}(x)^{p-\varepsilon}}\,d\mu(x) +  
     C_2\int_{12\tau^2 B} g(x)^{p-\varepsilon}\,d\mu(x)\,,
\end{align*}
where
\[
C_1 = C_H  C_{X,p_0,p} \varepsilon (p-\varepsilon)^{-1}\qquad\text{ and }\qquad
C_2 = C_H(1+C_{X,p_0,p} \varepsilon(p-\varepsilon)^{-1}  C_{C_D,p_0,p,\varepsilon})\,.
\] 
In order to apply Lemma~\ref{l.absorb}, we write
\[
\varsigma=8\tau\,,\qquad \sigma =3\tau/2 \,,\qquad q=p-\varepsilon\,.
\]
Recall our convention in \S\ref{s.constants} and choose $0<\varepsilon<(p-p_0)/2$, 
depending on $X$, $p_0$, $p$, and $C_H$, in such a way that
\[
C_3=1-C_1(1+ C_{C_D,\sigma,\varsigma,p}) \ge 1/2\,.
\] 
Then, Lemma~\ref{l.absorb} yields that 
\begin{align*}
\int_{12\tau^2 B\setminus E_{B}} \frac{\lvert u(x)\rvert^{p-\varepsilon}}{d_{E_{B}}(x)^{p-\varepsilon}}\,d\mu(x)
\le 2C_3\int_{12\tau^2 B\setminus E_{B}} \frac{\lvert u(x)\rvert^{p-\varepsilon}}{d_{E_{B}}(x)^{p-\varepsilon}}\,d\mu(x)
\le 2C_4\int_{12\tau^3 B} g(x)^{p-\varepsilon}\,d\mu(x)\,,
\end{align*}
where $C_4=(1+C_2)C_{X,\sigma,\varsigma,p-\varepsilon}$. 
This proves the claim in the case where $u=0$ in an open set $U\subset X$ with
$\dist(E,X\setminus U)>0$.

To prove the general case, let $u\in \Lip_0(X\setminus E)$. Clearly, we may
assume that $u$ does not vanish everywhere  in $X$.
For every $\delta>0$, we define a Lipschitz function $u_\delta=\max\{0,\lvert u\rvert-\delta\}$.  
Now  $\Lip(u_\delta,\cdot)\le \Lip(u,\cdot)$ and $u_\delta$ vanishes in the open
neighbourhood $U_\delta=\{\lvert u\rvert < \delta\}$ of $E$.
Thus, by monotone convergence and
the special case of inequality~\eqref{e.improved} that was established above
we conclude that
\begin{align*}
\int_{12\tau^2 B\setminus E_{B}} \frac{\lvert u(x)\rvert^{p-\varepsilon}}{d_{E_{B}}(x)^{p-\varepsilon}}\,d\mu(x)
&=\lim_{\delta\to 0} \int_{12\tau^2 B\setminus E_{B}} 
    \frac{\lvert u_\delta(x)\rvert^{p-\varepsilon}}{d_{E_{B}}(x)^{p-\varepsilon}}\,d\mu(x)\\
&\le C \liminf_{\delta\to 0} \int_{12\tau^3 B}  \Lip(u_\delta,x)^{p-\varepsilon}\,d\mu(x)\\
&\le C \int_{12\tau^3 B}  \Lip(u,x)^{p-\varepsilon}\,d\mu(x)\,.
\end{align*}
This proofs the claim in the general case $u\in \Lip_0(X\setminus E)$.
\end{proof}

\bibliographystyle{abbrv}
\def\cprime{$'$}

\end{document}